\documentclass[11pt,reqno,a4paper]{amsart}
\usepackage{graphicx}
\usepackage{subcaption}
\usepackage{enumerate} \usepackage[colorlinks=true, pdfstartview=FitV,
linkcolor=blue, citecolor=red, urlcolor=blue]{hyperref}
\usepackage{amsmath,amsfonts,latexsym,amssymb}
\usepackage[utf8]{inputenc} 
\usepackage[T1]{fontenc}
\usepackage{braket}
\usepackage{ae,aecompl}
\usepackage{dsfont} 
\usepackage{pxfonts}
\usepackage{microtype}
\usepackage{ae,aecompl}
\usepackage{marginnote}
\usepackage{accents}
\newtheorem{theorem}{Theorem}[subsection]
\newtheorem{lemma}[theorem]{Lemma}
\newtheorem{proposition}[theorem]{Proposition}
\newtheorem{corollary}[theorem]{Corollary}

\newtheorem{definition}[theorem]{Definition}

%%%%% commandes%%%%%

\newcommand{\T}{\ms T}
\renewcommand{\leq}{\leqslant}
\renewcommand{\geq}{\geqslant}

\newcommand{\rep}{\operatorname{Rep}}

\renewcommand{\epsilon}{\varepsilon}
%%%%%%%%%%%%%%%%%%
\newcommand{\sbt}{\,\begin{picture}(-1,1)(-1,-1)\circle*{2}\end{picture}\ }
\renewcommand{\dot}[1]{\overset{\sbt}{#1}}

\renewcommand{\hom}{\operatorname{Hom}}

\makeatletter
\newcommand{\oset}[3][0ex]{%
  \mathrel{\mathop{#3}\limits^{
    \vbox to#1{\kern-2\ex@
    \hbox{$\scriptstyle#2$}\vss}}}}
\makeatother

\newcommand{\sld}{\mathsf{SL}_2(\mathbb R)}

\newcommand{\ms}{\mathsf}

\newcommand{\cE}{{\mathcal E}}
\newcommand{\defeq}{\coloneqq}

\newcommand{\sop}{{\mathsf{SO}(p,p)}}

\newcommand{\diffo}{\left.\frac{\rm d}{\rm d t}\right\vert_{t=0}}

\newcommand{\ext}{\mathsf{\Lambda}}
\newcommand{\cV}{\mathcal V}

\newcommand{\flo}[1]{\left(#1_t\right)_{t\in\mathbb R}}
%\newcommand{\qed}{~{\sc Q.e.d.}\vskip 0.2 cm}

%% Suite

%% Application

  \newcommand{\grf}{\pi_1(\Sigma)}
  \newcommand{\bgrf}{\partial_\infty\grf}
  \newcommand{\bL}{\mathbf L}
  \newcommand{\bF}{\mathbf F}
  \title{Entropy and affine actions for surface groups}
  \author{François Labourie}
\begin{document}
  \maketitle
  %\tableofcontents
    \begin{abstract}
    	We give a short and independent proof of a theorem of Danciger of Zhang: surface groups with Hitchin linear part cannot act properly on the affine space. 
    	The proof is fundamentally different and relies on ergodic methods.
    \end{abstract}   
\subsection{Introduction} Our goal is to give an independent proof, based on thermodynamical ideas, of a recent theorem by Danciger and Zhang \cite{Danciger:2018tl}.
  \begin{theorem}\label{theo:aff}
	Assume that a surface group acts on the affine space so that its linear part is a Hitchin representation. Then its action on the affine space is not proper.
\end{theorem}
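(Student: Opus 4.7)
The plan is to argue by contradiction: assuming $\rho$ is Hitchin and $u$ is a translation cocycle whose associated affine action on $V$ is proper, I will use thermodynamical formalism to produce a $\gamma\in\pi_1(\Sigma)$ whose Margulis invariant obstructs properness. For each $\gamma\neq 1$ I would first split $V$ into $\rho(\gamma)$-weight spaces and write $u(\gamma) = u^+(\gamma) + u^0(\gamma) + u^-(\gamma)$, with $u^0(\gamma)$ in the $1$-eigenspace (the neutral/translation part). The Margulis invariant $\mu_u(\gamma)$ is the signed length of $u^0(\gamma)$ along a canonically oriented neutral axis; proper discontinuity forces $\mu_u$ to have uniform sign on every non-trivial element and to grow linearly in translation length --- this is the classical Margulis--Abels obstruction, transplanted to the Hitchin setting via the Anosov structure of $\rho$.

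Next, the Anosov property of Hitchin representations yields a H\"older equivariant extension of the neutral line field to the unit tangent bundle $T^1\Sigma$. The cocycle $u$ then defines a H\"older function $f_u : T^1\Sigma \to \mathbb R$ whose integral along any periodic orbit $\gamma$ of the geodesic flow equals $\mu_u(\gamma)$. Under the properness hypothesis, $f_u$ has strictly positive integral (say) on every periodic orbit; by Livsic-type arguments this is equivalent to $f_u$ being cohomologous to a strictly positive H\"older function, and therefore to having strictly positive equilibrium average against \emph{every} flow-invariant probability measure on $T^1\Sigma$.

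The crux is to exhibit one flow-invariant measure against which $f_u$ has vanishing mean. My plan is variational: view $u$ as a first-order deformation of $\rho$ inside $\mathsf{Aff}(V)$ (equivalently as a class in $Z^1(\pi_1(\Sigma), V_\rho)$), and identify $f_u$ with $\diffo$ of a one-parameter family of H\"older length potentials produced by the pressure-theoretic framework of Bridgeman--Canary--Labourie--Sambarino. Since the Hitchin representation $\rho$ sits at a critical point of a natural pressure-normalized entropy functional, the first variation in any tangent direction must vanish, delivering the required mean-zero property against the associated equilibrium state --- and the sign change this forces on the periodic spectrum of $f_u$ contradicts the uniform positivity coming from properness.

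The main obstacle is precisely this identification: the cocycle $u$ does not \emph{a priori} integrate to an honest family inside the original Hitchin target, so one must work in the affine group, verify that the Anosov pressure machinery extends and differentiates cleanly in the affine direction, and check that the criticality statement being invoked really pins down the first moment of $f_u$ itself, not merely the integrated entropy, against the equilibrium measure. Once this variational identity is established, the Livsic/sign-change mechanism closes the proof.
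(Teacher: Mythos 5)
Your plan shares the paper's high-level architecture --- encode the Margulis invariant as a flow-cocycle, then produce an invariant measure with zero diffusion by differentiating an entropy functional --- but the hard content of the proof is precisely the set of steps you flag as ``the main obstacle,'' and without them the argument does not close. The gaps are concrete:

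First, before any variational machinery can be applied, the paper reduces to the case where the linear part lies in $\mathsf{SO}(p,p-1)$. This uses Guichard's classification of Zariski closures of Hitchin representations together with the elementary observation that the linear part of an element acting properly on affine space must have $1$ in its spectrum, forcing odd dimension and the non-Zariski-dense case $\mathsf{SO}(p,p-1)$. You omit this step, and without it the rest of the construction has no special orthogonal structure to exploit. Second, and crucially, the translation cocycle $u$ is \emph{not} treated in the affine group directly; the paper embeds $\mathbb R^{p,p-1}\rtimes\mathsf{SO}(p,p-1)$ into $\mathsf{SO}(p,p)$, so that $u$ becomes a genuine tangent vector at $\rho^0$ in $\operatorname{Rep}(\Gamma,\mathsf{SO}(p,p))$ and one may integrate it to a curve of \emph{linear} Anosov representations $\rho_t$. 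You explicitly worry that ``$u$ does not a priori integrate to an honest family''; the embedding resolves exactly this, and is indispensable for the pressure machinery to apply.

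Third, your invocation of ``a natural pressure-normalized entropy functional'' at which the Hitchin representation ``sits at a critical point'' is where the real theorem lives, and your proposal offers no route to it. The relevant functional is not generic: it is the topological entropy of the \emph{last root flow}, whose periods are $\log\lambda_p(\rho(\gamma))+\log\lambda_{p-1}(\rho(\gamma))$, and the Margulis diffusion is identified (Lemma \ref{lem:marg}) as $\diffo\lambda_p$, with $\diffo\lambda_i=0$ for $i<p$, so that it couples exactly to this reparametrisation. The fact that this entropy equals $1$ on a neighbourhood of the Hitchin locus in $\mathsf{SO}(p,p)$ (Theorem \ref{theo:entrop}) is not a formal criticality statement: it requires proving $C^1$-regularity of the isotropic limit curve (Theorem \ref{theo:smooth}), which in turn rests on a proximal-bundle lemma (Lemma \ref{lem:prox}) and a transversality property (Corollary \ref{pro:trans}) traceable to Lusztig positivity, followed by a Potrie--Sambarino SRB argument and analyticity of entropy. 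None of this is present or even sketched in your proposal. Finally, a smaller point: your Livsic detour (positive periods $\Rightarrow$ cohomologous to a positive function $\Rightarrow$ positive mean against all measures) is not how the properness criterion actually runs; what is used is the Ghosh--Treib theorem that a single invariant measure with vanishing diffusion already implies non-properness, which is the form directly delivered by the Abramov computation.
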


The proof given in this article, which also gives results  of independent  interest (Theorems \ref{theo:smooth} and \ref{theo:entrop}) , uses ergodic theory and hyperbolic dynamics: entropy, Sinai--Ruelle--Bowen measures. We hope that the use of this type of methods open a novel approach on the study of proper affine actions, expanding previous work of Goldman Margulis and the author.

Being very optimistic, as an approach to Auslander conjecture,  one could hope that, in the spirit of Kahn--Markovic \cite{Kahn:2009wh} and Kahn--Labourie--Moze \cite{Kahn:2018wx}, the presence of free groups could help in building surfaces groups close to be Fuchsian inside groups acting cocompactly on the affine space.

A {\em surface group} is the fundamental group of a closed connected oriented surface of genus greater than 2.
A {\em Hitchin representation} \cite{Hitchin:1992es} is a representation that can be deformed into a {\em Fuchsian representation}, that is a discrete representation with values in an irreducible $\mathsf{SL}(2,\mathbb R)$. 

A conjecture, attributed to Auslander \cite{Auslander:1964fs}, states that if a group  $\Gamma$ acts properly and cocompactly on the affine space then it does not contain a free group. This conjecture has been proved up to dimension 7 by Abels, Margulis and So\"{\i}fer in \cite{Abels:2012wd}. On the other hand, Margulis in \cite{Margulis:1983a} has exhibited free groups acting properly on the affine space.  A work  of  Goldman, Margulis and the author \cite{Goldman:2008}, further extended by Ghosh and Treib \cite{Ghosh:yPvRLZo8}, have shown how to characterise proper actions of a hyperbolic group using the {\em Labourie--Margulis diffusion}, which is an extension to measures -- introduced  in \cite{Labourie:2001} --  of the {\em Margulis invariant} introduced by Margulis in \cite{Margulis:1984}. As for surface groups, there were shown by Mess \cite{Mess:2007kx}  to admit  no proper affine actions on the affine 3-space. An alternate proof was given by Goldman and Margulis \cite{Goldman:2000a} and then by the author  \cite{Labourie:2001} with the extension to groups whose linear part is Fuchsian. On the other hand, Danciger, Guéritaud  and Kassel \cite{Danciger:2018va} exhibited examples of proper affine actions of surface groups, or more generally some Coxeter groups,  in higher dimensions. For a survey and similar considerations  see \cite{Smilga} and \cite{survey}

I thank Sourav Ghosh,  Fanny Kassel, Andres Sambarino, Ilia Smilga and Tengren Zhang for valuable discussions and input. 

\subsubsection{A sketch of the proof}
As an initial observation, we observe  that the problem reduces to the case of representations whose linear part is in $\mathsf{SO}(p,p-1)$.  Indeed,  according to Guichard, the Zariski closure $\mathsf G$ of a Hitchin  representation always contains the irreducible $\sld$, and, if non Zariski dense, contained in either $\mathsf{Sp}(2p)$ or $\mathsf{SO}(p,p-1)$. Recall finally that if an element of the affine group acts properly on the affine space, then 1 is an eigenvalue of its linear part. Thus 1 is in the spectrum of any element in the Zariski closure of its linear part. It follows that the representation is in odd dimensions and non Zariski dense in $\mathsf{SL}(2p-1)$, thus contained in $\mathsf{SO}(p,p-1)$. 

After this initial observation, the proof follows the thermodynamic theme introduced in \cite{Labourie:2001}.  A sketch is as follows

From now on, let $\Gamma$ be a surface group whose linear part is a Hitchin representation in $\mathsf{SO}(p,p-1)$.  The {\em Labourie-Margulis diffusion $M$}  is a continuous function on the space of  measures  invariant  by the geodesic flow of the surface, associated to  the representation on the affine space \cite{Labourie:2001}. According to a generalisation of \cite{Labourie:2001,Goldman:2008} due to Ghosh and Treib \cite[Theorem 7.1 and Definition 4.4]{Ghosh:yPvRLZo8}, if there exists a measure   $\mu$ so that $M(\mu)=0$, then the action on the affine space is not proper.

As a first step in the proof,  we embed  the Lie algebra  of ${\mathbb R}^{p,p-1}\rtimes \mathsf{SO}(p,p-1)$ as a subalgebra of $\sop$. Thus an affine representation is seen as a deformation of the linear part of the representation in $\sop$. As in \cite{Goldman:2000a,Ghosh:2018wx}, we now interpret in lemma \ref{lem:marg} the Margulis invariant as a variation of the $p^{\tiny th}$ eigenvalue (or the $(p+1)^{\tiny th}$), while the other eigenvalues remain constant. 

As a consequence of the Abramov lemma and the definition of equilibrium states as done in \cite{Sambarino:2014jv}, we can now interpret, in lemma \ref{lem:abra}, the Margulis invariant as the variation of the topological entropy of the {\em last root flow},  a flow for which the length of the closed orbit associated to $\gamma$ is the logarithm of the product of the $(p-1)^{\tiny th}$ and $p^{\tiny th}$ eigenvalue of $\rho(\gamma)$.

A recent series of results by Pozzetti, Sambarino and Weinhard \cite{Pozzetti:2019uc} implies among other things that this entropy is constantly equal to 1. We prove this result independently in Theorem \ref{theo:entrop} by proving that the {\em isotropic limit curve} is smooth and use an idea due to Potrie--Sambarino \cite{Potrie:2014uta} to obtain the same result. This is a parallel  to \cite[Theorem 9.9]{Pozzetti:2019uc}.

This smoothness, obtained in Theorem \ref{theo:smooth} now follows from a general lemma about proximal bundles -- lemma \ref{lem:prox} -- and a transversality property -- proposition \ref{pro:trans} -- that we prove for Fuchsian representations in $\mathsf{SO}(p,p)$. This transversality property is a consequence of Lusztig positivity \cite{Lusztig:1994} as used in \cite{Fock:2006a} and  we wonder whether this property could characterise Hitchin representations in $\mathsf{SO}(p,p)$ within Anosov representations.

Combining these simple ideas, on obtains that the Margulis invariant for the Bowen--Margulis measure of the last root flow is zero and thus concludes the proof of  the Theorem by Danciger and Zhang.

\subsection{Isotropic flags and the geometry of $\sop$}
Let $E$ be a vector space equipped with a metric $Q$ of signature $(p,p)$, let $\sop$ be its isometry group. For every vector space $V$ in $E$ we denote by $V^o$ its orthogonal with respect to the quadratic form. An {\em isotropic space} is a vector space on which the restriction of $Q$ vanishes, a {\em maximal isotropic plane} is an isotropic plane of dimension $p$. We denote by $\bL$ the space of maximal isotropic planes.

Recall that the action of $\sop$ on $\bL$ has two orbits, which are both connected components of $\bL$. To distinguish them, let us fix a spacelike $p$-plane $F$, and an orientation on $F$ and $F^o$.
Any $p$-isotropic plane $P$ is then the graph of a linear  isomorphism $A$ from $F$ to $F^o$. We say $P$ is {\em positive} when $A$ preserves the  orientation and {\em negative} otherwise. We denote by $\bL^+$ the space of positive $p$-isotropic planes and $\bL^-$ the space of negative $p$-isotropic planes. Any $(p-1)$-isotropic plane is contained in exactly one positive isotropic $p$-plane and one negative isotropic $p$-plane.

 An {\em isotropic flag} is a collection of isotropic planes $L=(L_i)_{1<i\leq p}$ so that $L_i\subset L_{i+1}$, $\dim(L_i)=i$. An isotropic flag $L$ can be positive or negative depending on $L_p$.  We denote by $\bF$  the space of positive isotropic flags. The group $\sop$ acts transitively on $\bF$ and the stabiliser of a point is the minimal parabolic subgroup of $\sop$.  Observe also that $L_p$ is determined by $L_{p-1}$.  Two isotropic flags $L$ and $M$ are {\em transverse} is for all $i$, we have $M_i\oplus L_i^{\circ}=L_i\oplus M_i^{\circ}=E$.
 
 A $p$-tuples of lines $E=(E_i)_{i=1,\ldots,p}$ is {\em isotropic} if $E_1+\ldots + E_p$ is maximal isotropic. The isotropic flag $F(E)$ {\em associated to $E$} is $
 F(E)=(L_1,\ldots, L_p)$ where $L_i=E_1+\ldots+ E_i$.
 Two $p$-tuples of lines $E=(E_i)_{i=1,\ldots,p}$ and  $\overline E=(\overline E_i)_{i=1,\ldots,p}$ are {\em $Q$-paired}  if they are both isotropic and  $Q$ restricted to $E_i \oplus \overline E_j$ is  zero for $i\not=j$ and non degenerate otherwise.
 We then have

\begin{proposition}{\sc [Transverse flags]} \label{pro:transv} The maps that sends  $(E,\overline E)$ to $(F(E),F(\overline E)$  is an $\sop$-equivariant bijection from the space of   $Q$-paired $p$-tuples of lines to the set of transverse flags.
\end{proposition}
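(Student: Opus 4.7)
The plan is to establish $\sop$-equivariance (immediate, since every ingredient—the sum of lines, the flags, the orthogonal complements, and the pairing $Q$—is natural under $\sop$), and then prove bijectivity by constructing an explicit inverse.

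First I would treat the forward direction. Given a $Q$-paired pair $(E,\overline E)$, set $F(E)=(L_i)$ and $F(\overline E)=(M_i)$. Transversality amounts to $L_i\cap M_i^\circ=\{0\}$, since $\dim L_i+\dim M_i^\circ=2p$. Writing $v\in L_i\cap M_i^\circ$ as $v=\sum_{k=1}^i a_k e_k$ with $e_k\in E_k$, the orthogonality with each $\overline e_j\in\overline E_j$, $j\leq i$, collapses by the $Q$-pairing assumption to $a_j\, Q(e_j,\overline e_j)=0$, and nondegeneracy of $Q$ on $E_j\oplus\overline E_j$ forces $a_j=0$, so $v=0$.

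Next I would construct the inverse: for a pair of transverse flags $(L,M)$, define
\[
E_i \defeq L_i\cap M_{i-1}^\circ, \qquad \overline E_i \defeq M_i\cap L_{i-1}^\circ,
\]
with the convention $L_0=M_0=\{0\}$. Transversality at level $i-1$ gives $L_{i-1}+M_{i-1}^\circ=E$, hence $L_i+M_{i-1}^\circ=E$, and a dimension count yields $\dim E_i=1$. Since $E_i\subset M_{i-1}^\circ$ while $L_{i-1}\cap M_{i-1}^\circ=\{0\}$, the line $E_i$ is disjoint from $L_{i-1}$, and induction on $i$ gives $L_i=E_1+\cdots+E_i$. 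Because the formula for $E_i$ depends only on $(L,M)$, this simultaneously proves surjectivity and injectivity.

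It remains to check that $(E,\overline E)$ is $Q$-paired. Isotropy is automatic since $L_p=E_1+\cdots+E_p$ is maximal isotropic. For $j<i$ one has $\overline E_j\subset M_{i-1}$ while $E_i\subset M_{i-1}^\circ$, so $Q(E_i,\overline E_j)=0$; the case $j>i$ is symmetric. The nondegeneracy of $Q$ on $E_i\oplus\overline E_i$ is the only delicate point: if $Q(E_i,\overline E_i)=0$, then $E_i$ would be orthogonal to every $\overline E_j$, hence $E_i\subset M_p^\circ=M_p$; combined with $E_i\subset L_p$ and $L_p\cap M_p=\{0\}$ (transversality at level $p$, using $M_p^\circ=M_p$), this forces $E_i=\{0\}$, a contradiction. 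The hard part is really just careful bookkeeping with the dimension counts; the conceptual content is the identity $E_i=L_i\cap M_{i-1}^\circ$, which is dictated by the $Q$-pairing conditions and leaves no freedom in reconstructing the tuple from the flags.
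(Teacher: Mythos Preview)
Your proof is correct and complete. The paper actually states this proposition without proof (it is treated as a standard linear-algebra fact about flags in $(p,p)$-signature), so there is nothing to compare against; your explicit inverse $E_i=L_i\cap M_{i-1}^\circ$ together with the dimension counts and the $L_p\cap M_p=\{0\}$ argument for nondegeneracy is exactly the natural way to fill in the details.
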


Let us conclude with a description of the tangent space to $\bL$: 
Let $E_0$, $E_1$  be two transverse isotropic planes. Let $F$ be a $p$-plane transverse to  to $E_1$, so that  $F$ is the graph of $f\in \hom(E_0,E_1)$. Let $\omega_F$ be the 2-form on $E_0$ given by $\omega_F(u,v)=Q(u,f(v))$.

\begin{proposition}{\sc [Identification]}\label{pro:tgiso}
	Let $\theta_0$ and $\theta_1$ be two transverse isotropic planes. The map $F\to \omega_F$ is a diffeomorphism between the space of isotropic planes transverse to $\theta_1$ and $\ext^2(\theta_0^*)$. 
	In particular, $\T_{\theta_0}\bL=\hom(\theta_0,\theta_1)$ identifies with $\ext^2(\theta_0^*)$. 
\end{proposition}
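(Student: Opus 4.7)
The plan is to exhibit $F\mapsto\omega_F$ as a linear bijection after a canonical identification, then to read the tangent space statement off the fact that it gives a global affine chart of $\bL$ near $\theta_0$.

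First I would use that $\theta_0$ and $\theta_1$ are transverse maximal isotropic in a quadratic space of signature $(p,p)$: this forces $Q$ to restrict to a non-degenerate pairing $\theta_0\times\theta_1\to\mathbb R$, so it induces a canonical identification $\theta_1\cong\theta_0^*$. Passing to $\hom(\theta_0,\theta_1)$, this gives a linear isomorphism $\hom(\theta_0,\theta_1)\to \theta_0^*\otimes\theta_0^*$ which is precisely $f\mapsto\omega_F$ with $\omega_F(u,v)=Q(u,f(v))$.

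Next I would compute the isotropy condition on $F=\graph(f)$. For $u,v\in\theta_0$, expanding
\[
Q(u+f(u),v+f(v))=Q(u,v)+Q(u,f(v))+Q(f(u),v)+Q(f(u),f(v)),
\]
the outer terms vanish because $\theta_0$ and $\theta_1$ are isotropic, so $F$ is isotropic iff $Q(u,f(v))+Q(f(u),v)=0$ for all $u,v$, which by symmetry of $Q$ is the antisymmetry condition $\omega_F(u,v)=-\omega_F(v,u)$. Thus under the isomorphism above, the isotropic graphs correspond exactly to $\ext^2(\theta_0^*)\subset\theta_0^*\otimes\theta_0^*$, giving a linear, hence smooth, bijection. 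Linearity together with the obvious smooth inverse (solve $\omega(u,v)=Q(u,f(v))$ for $f$ using non-degeneracy) yields the claimed diffeomorphism.

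Finally, the tangent space statement is immediate: the open subset of $p$-planes transverse to $\theta_1$ is the standard affine chart of $\operatorname{Gr}_p(E)$ based at $\theta_0$ with model space $\hom(\theta_0,E/\theta_0)\cong\hom(\theta_0,\theta_1)$, and $\bL$ is cut out inside $\operatorname{Gr}_p(E)$ by the isotropy condition, which in this chart is the \emph{linear} subspace $\ext^2(\theta_0^*)$. Hence the chart restricts to a linear chart on $\bL$ around $\theta_0$, and differentiating at $\theta_0$ identifies $\T_{\theta_0}\bL$ with $\ext^2(\theta_0^*)$, sitting inside $\hom(\theta_0,\theta_1)$ as the subspace of maps whose associated bilinear form is antisymmetric. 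There is no real obstacle: the only thing to handle carefully is the bookkeeping for the canonical identifications $\theta_1\cong\theta_0^*$ and $E/\theta_0\cong\theta_1$, so that the linear structure of the chart matches the one on $\ext^2(\theta_0^*)$.
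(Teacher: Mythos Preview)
Your argument is correct and is the standard computation: identify $\theta_1\cong\theta_0^*$ via the non-degenerate pairing induced by $Q$, write any $p$-plane transverse to $\theta_1$ as a graph of $f\in\hom(\theta_0,\theta_1)\cong\theta_0^*\otimes\theta_0^*$, and observe that isotropy of the graph is exactly antisymmetry of the associated bilinear form. The tangent-space statement then follows because this is a global linear chart.

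The paper itself does not supply a proof of this proposition; it is stated as a basic fact about the isotropic Grassmannian and used later (notably in the proof of the Smoothness Theorem) only through the identification $\T_{\theta_0}\bL\cong\ext^2(\theta_0^*)$. So there is nothing to compare against beyond noting that your proof is the natural one and matches what the paper implicitly takes for granted. One small remark worth making explicit, since the paper emphasises that $\bL$ has two connected components: your chart is connected, so it lands entirely in the component of $\bL$ containing $\theta_0$, and in fact every maximal isotropic transverse to $\theta_1$ lies in that component (the other component consists of planes meeting $\theta_1$ in odd codimension, hence never transversally). This is harmless for the application but clarifies the phrase ``the space of isotropic planes transverse to $\theta_1$''.
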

\subsection{Anosov representations for ${\sop}$ and $\mathsf{SO}(p,p-1)$}
Let $\Sigma$ be a closed hyperbolic surface, $X$ its unitary tangent bundle and $\flo{\varphi}$ its geodesic flow. We also write  $\Gamma\defeq \pi_1(\Sigma)$.

Let $\rho$ be a representation of $\Gamma$ in ${\sop}$  that we see acting on a vector space $E$ equipped with a quadratic form $\braket{\mid}$ of signature $(p,p)$. We denote by $\cE_\rho$ the associated flat bundle on $X$ and $E_x$, the fibre of $\cE_\rho$ at a point $x$ in $X$.

Observe that $\flo{\varphi}$ lifts to a flow $\flo{\Phi}$ acting on $\cE_\rho$ by vector bundle automorphisms which are parallel along the geodesic flow
\begin{definition}{\sc[Anosov representations for ${\sop}$]}\label{def:ano}
We say $\rho$ is {\em  Borel Anosov for $\mathsf{SO(}p,p)$}, if the bundle $\cE_\rho$ splits into $2p$ continuous  line  bundles $\cE_{i}$, $\overline \cE_i$ with $1\leq i\leq p$,  with the following properties
\begin{enumerate}
	\item The lines bundles $\cE_i$ and $\overline \cE_i$ are invariant under $\flo{\Phi}$ and $Q$-paired,
	\item The flow $\flo{\Phi}$ contracts the bundles 
\begin{eqnarray}
	\cE_i^*\otimes \overline{\cE_p}\otimes \cE_i &  &\hbox{ when $i<p$, }\\
	\cE_i^*\otimes \cE_j & & \hbox{ when $j<i$,} \ .
\end{eqnarray}

\end{enumerate}
\end{definition} We recall that a flow $\flo{\Phi}$ {\em contracts  on a bundle $E$} over a compact manifold if  there exists a continuous metric  and  positive constants $a$ and $b$, so that for all positive $t$, 
$\Vert \Phi_{t}u\Vert\leq ae^{-bt}\Vert u\Vert$.
To be contracting on a compact manifold is independent on the parametrisation of the flow or the choice of the metric.

Let $C_\gamma$ be a closed orbit of the flow on $X$ of length $\ell_\Gamma$ associated to an element $\gamma$ in $\Gamma$. Then $\rho(\gamma)$ is conjugated to the endomorphism $\Phi_{\ell_\gamma}$ of $E_x$, and in particular $(\cE_i)_x$ and $(\overline \cE_i)_x$  are eigenlines of  $\Phi_{\ell_\gamma}$. We denote by $\lambda_i\rho(\gamma)$  and $\overline\lambda_i\rho(\gamma)$  the  corresponding eigenvalues, which are also eigenvalues of $\rho(\gamma)$. The Anosov condition gives $\lambda_i\cdot\overline\lambda_i=1$ and the following ordering of eigenvalues
$$
\lambda_1>\lambda_2>\ldots >\sup\{\lambda_p,\lambda_p^{-1}\}\geq \inf\{\lambda_p,\lambda_p^{-1}\}>\ldots >\overline\lambda_1\ .
$$
\subsubsection{Limit curves} Let $\rho$ be an Anosov representation for $\sop$. 
We may lift the bundle $E_\rho$ to a trivial bundle over the unitary tangent bundle $Y$ of the hyperbolic bundle. The line bundles $\cE_i$ and $\cE_i$ also lifts and since they are parallel under $\flo{\Phi}$. Let then consider the maps
\begin{eqnarray*}
	E_i:(x,y)\mapsto E_i(x,y)\defeq(\cE_i)_z\ , \ \
	 \overline E_i:(x,y)\mapsto \overline E_i(x,y)\defeq({\overline\cE}_i)_z\ ,
\end{eqnarray*}
where $z$ in a point in the geodesic defined by the pair of distinct points $(x,y)$ in the boundary at infinity $\partial_\infty \mathbf H^2$ of the hyperbolic plane $\mathbf H^2$.
\begin{proposition}{\sc [Limit curve]}\label{pro:limi} We have 
$
E_q(x,y)\defeq \overline E_{q}(y,x)
$.
Moreover the isotropic flag $\xi(x,y)$ given by $(E_1(x,y), E_2(x,y),\ldots E_p(x,y))$ only depends on $x$. 
\end{proposition}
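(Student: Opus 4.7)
The plan is to treat the three assertions in order. Throughout I use that the pulled-back flat bundle on the simply connected space $Y$ is canonically trivial, and that the lines $\cE_i$ and $\overline\cE_i$ are parallel under $\flo{\Phi}$.

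For well-definedness, identify every fiber of the lifted bundle with the single vector space $E$ via the canonical trivialization of a flat bundle on a simply connected base. Parallel transport along the geodesic flow is then the identity, and $\flo{\Phi}$-invariance forces the fiber of $\cE_i$ (respectively of $\overline\cE_i$) to be constant along each oriented orbit. Hence $E_i(x,y)$ and $\overline E_i(x,y)$ are well-defined lines in $E$. The identity $E_i(x,y)=\overline E_i(y,x)$ follows by observing that reversing the orientation of the geodesic replaces $\flo{\Phi}$ by its time-reversal; the contraction conditions of Definition \ref{def:ano} are preserved by this operation only after the exchange $\cE_i\leftrightarrow\overline\cE_i$ implicit in the $Q$-pairing, and uniqueness of the Anosov splitting forces the two labelings to coincide.

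The main assertion is that the flag $\xi(x,y)$ depends only on $x$. The idea is to characterize $\xi(x,y)$ purely in terms of the backward orbit of $z$ in $\hh$, and then exploit the fact that any two geodesics with common past endpoint $x$ have backward orbits that become arbitrarily close. Concretely, select a sequence $\gamma_n\in\Gamma$ whose hyperbolic dynamics on $\hh$ has attracting fixed point tending to $x$ with uniformly contracting differential; the contraction hypotheses of Definition \ref{def:ano} translate, via the standard reformulation of the Anosov property, into north--south dynamics for $\rho(\gamma_n)$ on the flag variety $\bF$ whose attracting flag in the limit is exactly $\xi(x,y)$. The resulting formula $\xi(x,y)=\lim_n\rho(\gamma_n)\cdot\eta$, valid for any flag $\eta$ transverse to the repelling data, is manifestly independent of $y$; applying the same sequence to $(x,y')$ yields $\xi(x,y)=\xi(x,y')$.

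The principal obstacle lies in the passage from the contraction conditions — which mix $\cE_i$, $\overline\cE_i$ and the $Q$-pairing asymmetrically at the index $p$ — to the clean north--south statement on $\bF$. One needs to check that the Lagrangian bundle $\cE_1\oplus\cdots\oplus\cE_p$ is uniquely singled out by the contraction inequalities among the two possible continuations of the $(p-1)$-dimensional isotropic flag (using the final sentence of the previous subsection that $L_p$ is determined by $L_{p-1}$). Once this bookkeeping is in place, continuity of the Anosov line bundles together with the limit formula for $\rho(\gamma_n)$ deliver the independence from $y$.
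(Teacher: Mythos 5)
Your proposal takes a genuinely different route from the paper's. For the identity $E_i(x,y)=\overline E_i(y,x)$ you argue by time-reversal of $\flo{\Phi}$ together with uniqueness of Anosov-type splittings, whereas the paper restricts by density to periodic points $(x,y)=(\gamma^+,\gamma^-)$ and identifies $(\cE_i)_x$, $(\overline\cE_i)_x$ directly with eigenlines of $\rho(\gamma)$. For the independence of $\xi(x,y)$ from $y$ you invoke the reformulation of the Anosov property as uniform north--south dynamics of $\rho(\gamma_n)$ on $\bF$ for sequences $\gamma_n$ with $\gamma_n^+\to x$, which gives the statement for all $x$ at once; the paper instead fixes $x=\gamma^-$ periodic, uses equivariance of $\xi$ to write $\xi(\gamma^-,y)$ as a limit of $\rho(\gamma)^{n}\xi(\gamma^-,\gamma^{n}y)$, concludes from the fact that $\xi(\gamma^-,\gamma^+)$ is an attracting fixed point of $\rho(\gamma)$ on $\bF$, and then extends by density and continuity. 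Your approach avoids the density step but at the cost of invoking a nontrivial equivalence (uniform proximality of $\rho(\gamma_n)$, in the spirit of Guichard--Wienhard or Kapovich--Leeb--Porti) that the paper does not need. More importantly, you yourself flag, without resolving, the bookkeeping at index $p$ --- checking that the asymmetric contraction conditions of Definition \ref{def:ano} single out $\cE_1\oplus\cdots\oplus\cE_p$ as the attracting Lagrangian both under time-reversal and under $\rho(\gamma_n)$. That is exactly where the paper's eigenline argument is more transparent: at a periodic point the contraction hypotheses give an explicit ordering of the eigenvalues on $(\cE_i)_x$ and $(\overline\cE_i)_x$, so the matching of labels under $x\leftrightarrow y$ and the identification of the attracting flag are read off directly. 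As written your proof is therefore a valid alternative strategy but not yet a complete argument, while the paper's is elementary and self-contained given Definition \ref{def:ano}.
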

The map $\xi:x\mapsto\xi(x)\defeq\xi(x,y)$ is {\em the limit curve} of the Anosov representation.
\begin{proof} By density, it is enough to check  the first  identity for $(x,y)=(\gamma^+,\gamma^-)$ where $\gamma^+$ and $\gamma^-$ are respectively the attractive and repulsive points of an element  $\gamma$ of $\Gamma$. The result follows by the identification of $E_i$ with eigenlines of $\rho(\gamma)$. Similarly, for the second identity we know that $\xi(\gamma^-,\gamma^+)$ is an attractive point of $\rho(\gamma)$. It follows that $\xi(\gamma^-,y)=\rho(\gamma)^n\xi(\gamma^-,\gamma^{n}y)$. Since $\gamma^n(y)\rightarrow_{n\to\infty}=\gamma^+$. It follows that if $y\not=\gamma^-$
 $$\xi(\gamma^-,y)=\lim_{n\to\infty}\rho(\gamma)^n\xi(\gamma^-,\gamma^{n}y)=\xi(\gamma^-,\gamma^+)\ .$$
 This concludes the proof
\end{proof}
Using  proposition \ref{pro:transv}, we can recover the maps $E_i$ using the limit curve $\xi$. Let us finally define the {\em isotropic limit curves} $\Theta$ and $\overline\Theta$ from $\bgrf$ to $\bL$ as  
\begin{eqnarray}
\ \ \Theta\defeq \bigoplus_{i=1}^{p}E_i\ &,&\ \ \overline\Theta\defeq \bigoplus_{i=1}^{p}\overline E_i\ .
\end{eqnarray}

\subsubsection{Hitchin representations in $\mathsf{SO}(p,p-1)$ }\label{sec:hitch} 
By \cite{Labourie:2006}, if $\rho$ is a Hitchin representation in $\mathsf{SL}_{2p-1}(\mathbb R)$, we have a decomposition of the associated bundle 
	$$\mathcal V_\rho=\mathcal V_1\oplus \ldots\oplus \mathcal V_{2p-1}\ , \\$$
	 such that the line bundles $\mathcal V_i$ are invariant by the flow and the flow contracts $\hom(\mathcal V_i,\mathcal V_j)$ for $i>j$. If furthermore the representation is with values in  $\mathsf{SO}(p,p-1)$, the flow preserves a quadratic form of signature $(p,p-1)$, $\mathcal V_p$ is a timelike trivial bundle equipped with a trivial action of the flow, while the other $\mathcal V_i$ are lightlike.
\begin{proposition}\label{anoano}
	Any Hitchin representation with values in $\mathsf{SO}(p,p-1)$ is Anosov for $\sop$.
\end{proposition}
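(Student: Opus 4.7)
My plan is to exhibit the required $\mathsf{SO}(p,p)$-Anosov splitting directly, by adjoining one extra trivial direction to the Hitchin decomposition of $\mathcal{V}_\rho$ recalled in Section~\ref{sec:hitch}. Concretely, I would set $\mathcal{E}_\rho\defeq\mathcal{V}_\rho\oplus\mathcal{T}$, where $\mathcal{T}$ is a trivial line bundle on $X$ carrying a non-degenerate form of sign opposite to that on $\mathcal{V}_p$. Then $\mathcal{V}_p\oplus\mathcal{T}$ has signature $(1,1)$, $\mathcal{E}_\rho$ has signature $(p,p)$, and since $\rho$ acts trivially on the added direction when viewed in $\mathsf{SO}(p,p)$, the flow $\flo{\Phi}$ acts trivially on $\mathcal{T}$. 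This identifies $\mathcal{E}_\rho$ with the flat bundle of Definition~\ref{def:ano}.

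The decomposition I would propose is: for $1\leq i\leq p-1$, $\cE_i\defeq\mathcal{V}_i$ and $\overline\cE_i\defeq\mathcal{V}_{2p-i}$; while $\cE_p$ and $\overline\cE_p$ are declared to be the two isotropic lines of $\mathcal{V}_p\oplus\mathcal{T}$. Each is a continuous line bundle (for $i<p$ from the Hitchin picture, and for $i=p$ from the continuity of the null directions of a continuously varying signature-$(1,1)$ form), and each is invariant under $\flo{\Phi}$ (trivially for $i=p$, since $\flo{\Phi}$ acts as the identity on $\mathcal{V}_p\oplus\mathcal{T}$). The $Q$-pairing requirements then reduce to two facts: the $Q$-pairing of $\mathcal{V}_i$ with $\mathcal{V}_{2p-i}$ inside $\mathcal{V}_\rho$ (a consequence of the distinct eigenvalues of $\rho(\gamma)$ satisfying $\lambda_i\lambda_{2p-i}=1$, which forces $Q(\mathcal{V}_i,\mathcal{V}_j)=0$ whenever $j\neq 2p-i$), and the $Q$-orthogonality of $\mathcal{T}$ with $\mathcal{V}_\rho$.

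I would verify the contraction conditions by translating them back to the Hitchin picture. Those not involving the index $p$, namely $\hom(\overline\cE_i,\cE_i)=\hom(\mathcal{V}_{2p-i},\mathcal{V}_i)$ for $i<p$ and $\hom(\cE_i,\cE_j)=\hom(\mathcal{V}_i,\mathcal{V}_j)$ for $j<i<p$, are Hitchin-contracting because the source-index strictly exceeds the target-index. For the ones involving $\cE_p$ or $\overline\cE_p$, I would use that $\flo{\Phi}$ acts trivially on $\mathcal{V}_p\oplus\mathcal{T}$: so the flow on $\hom(\cE_p,\cE_j)$ or $\hom(\overline\cE_p,\cE_j)$ (for $j<p$) is conjugate to the flow on $\cE_j=\mathcal{V}_j$, which is itself contracting in view of the Hitchin contraction of $\hom(\mathcal{V}_p,\mathcal{V}_j)$ combined with the triviality of $\mathcal{V}_p$.

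No step is substantially obstructive: the argument is essentially a bookkeeping translation. The only point that requires a remark is the continuous, consistent choice of which of the two null lines of $\mathcal{V}_p\oplus\mathcal{T}$ is labelled $\cE_p$ and which $\overline\cE_p$; but since $\mathcal{V}_p$ and $\mathcal{T}$ are both trivial flat line bundles on the connected space $X$, so is $\mathcal{V}_p\oplus\mathcal{T}$, and the two null lines are globally trivial sub-bundles. Either labelling satisfies Definition~\ref{def:ano}, since the conditions involving $\cE_p$ and $\overline\cE_p$ are symmetric under their exchange once the flow action on both is trivial.
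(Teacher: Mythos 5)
Your proof is correct and takes essentially the same approach as the paper: the paper likewise sets $\cE_\rho=\mathcal V_\rho\oplus\mathbf R$ with the product quadratic form and uses the identical splitting $\cE_i=\mathcal V_i$, $\overline\cE_i=\mathcal V_{2p-i}$ for $i<p$, with $\cE_p,\overline\cE_p$ the lightlike lines of $\cV_p\oplus\mathbf R$. You merely spell out the verifications of the $Q$-pairing and contraction conditions that the paper's one-line proof leaves implicit.
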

\begin{proof}
	 Taking $\cE_\rho=\mathcal V_\rho\oplus \mathbf R$ -- where $\mathbf R$ is the trivial line bundle -- equipped with the product metric, we obtain the decomposition as wished by taking
for $i<p$, $\cE_i=\mathcal V_i$ and $\overline \cE_i=\mathcal V_{2p-i}$ and finally $\cE_p$ and $\overline \cE_p$ to be the lightlike lines in $\cV_p\oplus \mathbf R$. \end{proof}

 \subsubsection{The principal $\sld$-representations}
 In this section, we will give an explicit description of the map $E_i$ in the case of Fuchsian representations. Let \begin{eqnarray}
	A=\left(\begin{array}{rcl}1&z\\ 0&1\end{array}\right)\ , \ \ 
\Lambda=\left(\begin{array}{rcl} \lambda & 0\\ 0 &\lambda^{-1}\end{array}\right)\ . \label{def:ABL}
\end{eqnarray} 
Recall that the {\em $(2p-1)$-dimensional  irreducible representation} of $\sld$ preserves a quadratic form $\braket{\mid}$ of signature $(p,p-1)$. Moreover there exists a basis $\epsilon_1,\ldots,  \epsilon_{2p-1}$ so that, writing $\overline\epsilon_i\defeq\epsilon_{2p-i}$ and $\alpha_{k,m}\defeq\braket{A(\epsilon_k)\mid\overline{\epsilon_m}}$,  for  all $z\not=0$, 
\begin{eqnarray*}
	\braket{\epsilon_k\mid\overline\epsilon_m}=\delta_{k,m}&,&
\Lambda(\epsilon_m)=\lambda^{2p-2m}\epsilon_m\\
 \alpha_{k,m}\not=0\  \hbox{ if $m\geq k$}\ & ,& \alpha_{k,m}=0\ \hbox{ if $m<k$}\ .
 \end{eqnarray*} 
The {\em principal representation} of $\sld$  in $\sop$ is described as follows:  let $V$ be a vector space on which  $\sld$ acts irreducibly preserving a quadratic form of signature $(p,p-1)$; Let $(\epsilon_1,\ldots \epsilon_{2p-1})$ be the basis of  $V$ as above;  let $L$ be a  line generated by a vector $f$.  We  
 introduce now the base 
$(e_1,\ldots,e_{p},\overline e_1,\ldots, \overline e_{p})$ of $E\defeq V\oplus L$ where
\begin{eqnarray*}
\forall i<p, \ \ e_i=\epsilon_i\ ,\ \   \overline e_i\defeq\overline\epsilon_i\defeq\epsilon_{2p-i}& &
e_{p}=\epsilon_p- f\ ,\ \overline e_{p}=\epsilon_p+ f\ .
\end{eqnarray*}
Then $\sld$ preserves the quadratic form given in these coordinates by
$$
\braket{e_i\mid e_j}=\braket{\overline e_i\mid \overline e_j}=0\ , \ \ \braket{e_i\mid  \overline e_j}=\delta_{i,j}
$$
By convention, $(e_1,\ldots,e_p)$ generates a positive isotropic space.

\subsubsection{The Fuchsian representations in $\mathsf{SO}(p,p-1)$ and ${\sop}$}

Let  $\Sigma$ be equipped with a hyperbolic structure and $\bgrf$ is identified  with $\bf P^1(\mathbb R)$. 
 Let  $\rho$ be a {\em fuchsian representation} of $\Gamma$  in $\sop$ of the form $J\circ\nu$ where $\nu$ is a discrete representation of $\Gamma$ in $\sld$. Let for $i\leq p$,  the lines 
$E_i(x_0,y_0)$, respectively $\overline E_i(x_0,y_0)$ be generated by  $e_i$, respectively $\overline e_i$.

where $(x_0,y_0)=([1:0],[0:1])$ are elements of $\bgrf$. 
Then, since the stabiliser of $(x_0,y_0)$ is the group generated by $\Lambda$ and $\Lambda$ preserves $E_i(x_0,y_0)$ and $\overline E_i(x_0,y_0)$ , we define coherently
$$
E_i(Ax_0,Ay_0)\defeq A(E_i(x_0,y_0))\ ,\ \ \overline E_i(Ax_0,Ay_0)\defeq A(\overline E_i(x_0,y_0))\ .
$$
Then for all $x$ and $y$, 
$$
E_i(Ax,Ay)= A(E_i(x,y))\ ,\ ,\ \overline E_i(Ax,Ay)\defeq A(\overline E_i(x,y))\ .
$$
One now immediately checks the proposition
\begin{proposition}
	If $\Gamma$ is a Fuchsian group in $\mathsf{PSL}(2,\mathrm R)$, $J(\Gamma)$ is an Anosov representation for $\sop$, whose limit curve is $\xi(x)=F(E(x,y))$.
\end{proposition}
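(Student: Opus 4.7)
The plan is to derive the Anosov property directly from Proposition \ref{anoano} and then identify the abstract line-bundle decomposition it produces with the explicit equivariant lines $E_i,\overline E_i$ defined above.

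First, I would observe that the principal embedding $J:\sld\to\sop$ factors through $\mathsf{SO}(p,p-1)\hookrightarrow\sop$: by construction $\sld$ acts irreducibly on $V$ preserving the signature $(p,p-1)$ form $\braket{\cdot|\cdot}$, and trivially on $L=\mathbb R f$, so $\rho\defeq J\circ\nu$ is (the image of) a Fuchsian, and hence Hitchin, representation into $\mathsf{SO}(p,p-1)$. Proposition \ref{anoano} therefore produces a decomposition $\cE_\rho=\bigoplus_i\bigl(\cE_i\oplus\overline\cE_i\bigr)$ witnessing that $\rho$ is Anosov for $\sop$.

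Next, I would identify the fibers of $\cE_i,\overline\cE_i$ over the base point with $\mathbb R e_i,\mathbb R\overline e_i$. According to the recipe in the proof of Proposition \ref{anoano}, for $i<p$ one has $\cE_i=\mathcal V_i$ and $\overline\cE_i=\mathcal V_{2p-i}$ in the Hitchin decomposition of Section \ref{sec:hitch}; since at a point lying over $(x_0,y_0)$ the $\Lambda$-eigenlines $\mathcal V_i,\mathcal V_{2p-i}$ are spanned by $\epsilon_i=e_i$ and $\overline\epsilon_i=\overline e_i$, these match $E_i(x_0,y_0)$ and $\overline E_i(x_0,y_0)$. For $i=p$, $\cE_p$ and $\overline\cE_p$ are the two lightlike lines in $\mathcal V_p\oplus\mathbf R=\mathbb R\epsilon_p\oplus\mathbb R f$, which are precisely $\mathbb R(\epsilon_p-f)=\mathbb R e_p$ and $\mathbb R(\epsilon_p+f)=\mathbb R\overline e_p$, again matching the explicit definitions. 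The convention that $(e_1,\ldots,e_p)$ generates a positive isotropic plane places $\Theta(x_0)$ in the component $\bL^+$.

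Finally, by $\Gamma$-equivariance on both sides---the flat bundle decomposition on one hand, and the $\sld$-transitive construction of $E_i,\overline E_i$ on the other (well-defined because the stabiliser $\{\Lambda_t\}$ of $(x_0,y_0)$ preserves each line $\mathbb R e_i$ and $\mathbb R\overline e_i$, as $\Lambda$ acts diagonally in the $\epsilon$-basis and fixes $f$)---the equality of fibers at $(x_0,y_0)$ propagates to every pair $(x,y)\in\bgrf\times\bgrf$ with $x\neq y$. Proposition \ref{pro:limi} then yields $\xi(x,y)=F(E(x,y))$ depending only on $x$, whence $\xi(x)=F(E(x,y))$ for any $y\neq x$, as claimed. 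The only real obstacle is bookkeeping: matching the orientation of the maximal isotropic plane with $\bL^+$, which is handled by the explicit basis choice of Section \ref{sec:hitch}. No analytic work is required beyond what is already established in Proposition \ref{anoano}.
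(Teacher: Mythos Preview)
Your proposal is correct and supplies exactly the kind of verification the paper leaves implicit: the text simply states ``one now immediately checks the proposition'' without further argument, and your route---invoke Proposition~\ref{anoano} for the Anosov property, then match the abstract bundles $\cE_i,\overline\cE_i$ with the explicit $\sld$-equivariant lines $E_i(x,y),\overline E_i(x,y)$ via the eigenline description at $(x_0,y_0)$---is the natural way to unpack that check.

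One small point worth making explicit in your propagation step: $\Gamma$-equivariance alone does not carry the identification from the single pair $(x_0,y_0)$ to all pairs, since $\Gamma$ is not transitive on $\bgrf^{(2)}$. What closes the argument is that both families are \emph{flow}-invariant (hence depend only on the pair $(x,y)$), both are continuous, and the $\Gamma$-orbit of any pair is dense; equality on a dense set then extends by continuity. Alternatively, and perhaps more in the spirit of ``immediate'', one can bypass the matching altogether: the explicit $\sld$-equivariant lines $E_i,\overline E_i$ are manifestly $Q$-paired and flow-invariant, and the contraction conditions of Definition~\ref{def:ano} are read off directly from the weights $\lambda^{2(p-i)}$ of $\Lambda$ on $\epsilon_i$; this exhibits the Anosov decomposition outright, with limit curve $\xi(x)=F(E(x,y))$ by construction. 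Either way the content is the same.
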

The following transversality property will play a crucial role in the sequel
\begin{proposition}{\sc [Transversality]}
	For all pairwise distinct of triple points $(x,y,z)$ in $\bgrf$ 	\begin{eqnarray}
\Theta(z) &\pitchfork& \left(E_{p}(x,y)\oplus (E^o_{p-1}(x,y)\cap  \overline\Theta(y))\right)  \label{eq:trans2}\ . 
 \end{eqnarray}
\end{proposition}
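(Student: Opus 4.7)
The plan is to pass to the explicit principal $\sld$-representation model in the Fuchsian case, reduce to a single standard triple by $\sld$-equivariance, and extract the transversality from an elementary Lusztig-positive matrix computation. Since $\psl$ acts simply transitively on ordered triples of pairwise distinct points of $\bgrf \simeq \Rp$ and the whole setup ($\Theta$, $\overline\Theta$, the $E_i$, $\braket{\,\cdot\,|\,\cdot\,}$) is $\sld$-equivariant, it suffices to verify the claim for the standard triple $(x_0,y_0,z_0) = ([1:0],[0:1],[1:1])$. In the principal basis $(e_1,\ldots,e_p,\overline e_1,\ldots,\overline e_p)$ one has $\Theta(x_0)=\mathrm{span}(e_1,\ldots,e_p)$, $\overline\Theta(y_0)=\mathrm{span}(\overline e_1,\ldots,\overline e_p)$, with pairing $\braket{e_i|\overline e_j}=\delta_{ij}$. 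A direct orthogonality check identifies $E^o_{p-1}(x_0,y_0)\cap \overline\Theta(y_0)$ with $\mathrm{span}(\overline e_1,\ldots,\overline e_{p-2},\overline e_p)$, so the right-hand $p$-plane is $\mathfrak{p}\defeq\mathrm{span}(e_p,\overline e_1,\ldots,\overline e_{p-2},\overline e_p)$ and the claim reduces to showing $\Theta(z_0)\cap\mathfrak{p}=0$.

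Writing $z_0 = A_1 x_0$ with $A_1 = \left(\begin{smallmatrix}1 & 0 \\ 1 & 1\end{smallmatrix}\right)$, we have $\Theta(z_0) = \mathrm{span}(A_1 e_1,\ldots,A_1 e_p)$. Modeling $V$ as $\mathrm{Sym}^{2p-2}(\mathbb R^2)$ with $\epsilon_m = v^{2p-1-m}w^{m-1}$, a binomial expansion gives
\begin{equation*}
A_1(\epsilon_m) = \sum_{\ell=m}^{2p-1}\binom{2p-1-m}{\ell-m}\,\epsilon_\ell,
\end{equation*}
all of whose coefficients are strictly positive --- the elementary instance of the Lusztig positivity invoked in \cite{Fock:2006a}. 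Combining this with $A_1 f = f$ and the conversions $\epsilon_\ell = e_\ell$ for $\ell<p$, $\epsilon_\ell = \overline e_{2p-\ell}$ for $\ell>p$, $\epsilon_p = (e_p+\overline e_p)/2$, $f = (\overline e_p - e_p)/2$, one writes each $A_1 e_i$ explicitly in the $(e_j,\overline e_j)$ basis.

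The intersection $\Theta(z_0)\cap \mathfrak{p}$ vanishes iff the projections $\pi(A_1 e_i)$ in $E/\mathfrak{p}\simeq \mathrm{span}(e_1,\ldots,e_{p-1},\overline e_{p-1})$ are linearly independent. One reads off $\pi(A_1 e_i) = \sum_{\ell=i}^{p-1}\binom{2p-1-i}{\ell-i}\,e_\ell + \binom{2p-1-i}{p+1-i}\,\overline e_{p-1}$ for $i<p$ and $\pi(A_1 e_p) = (p-1)\,\overline e_{p-1}$; assembling these as rows of a $p\times p$ matrix with columns $(e_1,\ldots,e_{p-1},\overline e_{p-1})$ produces an upper-triangular matrix with diagonal entries $1,1,\ldots,1,p-1$, hence nonsingular, proving the transversality. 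The main obstacle is purely bookkeeping: one must verify that the distinguished row $\pi(A_1 e_p)$ lands exactly in the column $\overline e_{p-1}$ with the nonzero coefficient $\binom{p-1}{1}=p-1$, supplying the missing pivot. Conceptually the statement simply records that the positive unipotent $A_1$ is totally positive in the principal representation, which forces the required staircase to appear.
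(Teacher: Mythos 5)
Your proof is correct and essentially reproduces the paper's argument: both reduce by $\sld$-equivariance to a normal-form triple, write the unipotent explicitly in the principal representation, and observe that the resulting $p\times p$ matrix is triangular with nonzero pivots furnished by binomial coefficients --- precisely the Lusztig positivity making the paper's $\alpha_{k,m}\neq 0$ for $m\geq k$ (your diagonal $1,\ldots,1,p-1$ corresponds to the paper's nonvanishing $\alpha_{1,1},\ldots,\alpha_{p-1,p-1},\alpha_{p,p+1}$). One small correction to the reduction step: $\psld$ does \emph{not} act simply transitively on ordered pairwise-distinct triples of $\bgrf\simeq\Rp$ --- there are two orbits, distinguished by cyclic order --- so instead of fixing $z_0=[1:1]$ you should, as the paper implicitly does, take $z_0=A_t\cdot x_0$ for an arbitrary $t\neq 0$; the computation is insensitive to the sign of $t$ since the pivots become $1,\ldots,1,(p-1)t$ and remain nonzero.
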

\begin{proof} 
 It is enough  to consider the case $x=[1:0]$, $y=[0:1]$ and  $z=[z:1]=A([0:1)]$ where $A$  is as in Equation \ref{def:ABL}. Let now   
\begin{eqnarray*}
	u&=& -b_pf + \sum_{m=1}^{p} b_m\epsilon_m\in \Theta(x)\ ,\\
\hbox{so that }\ \  A(u)&\in&  \Theta(z) \cap F(x,y)\ , \\ 
\hbox{ where } F(x,y)&\defeq&  (E_{p}(x,y)\oplus (E^o_{p-1}(x,y)\cap\overline\Theta(y))\ .
\end{eqnarray*}
 Recall now that  $F^o(x,y)$ is generated by
$\left\{\overline\epsilon_{1},\ldots,\overline\epsilon_{p-1},\overline \epsilon_{p+1},\right\}$. Thus  for $k\leq p+1$ and $k\not=p$, $
\braket{A(u)\mid \overline\epsilon_k}=0$, in other words
\begin{eqnarray*}
	 0=\sum_{m=1}^{k}\alpha_{m,k} b_{m}\ .
\end{eqnarray*}
The matrix corresponding to this system  is upper triangular with non zero coefficients, it follows that for all $1\leq m\leq p$, we have $b_k=0$. Thus $\Theta(z)\cap F(x,y)=\{0\}$. \end{proof}
\begin{corollary}\label{pro:trans} Let $\rho$ be a representation close to a Fuchsian representation. Then the transversality property \eqref{eq:trans2} holds 
	\end{corollary}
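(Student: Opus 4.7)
The plan is to upgrade the transversality established for the Fuchsian representation to any nearby representation by a standard compactness/openness argument, which is available here because the property in question is open and $\Gamma$-equivariant and because the limit curves of Anosov representations depend continuously on $\rho$.

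First I would note three structural facts. (i) The transversality relation \eqref{eq:trans2} is invariant under the diagonal $\Gamma$-action on triples of pairwise distinct points in $\bgrf$, since the limit curves $\Theta,\overline\Theta$ are $\rho$-equivariant. (ii) The quotient of the space of pairwise distinct triples $(x,y,z)\in(\bgrf)^3$ by the diagonal action of $\Gamma$ is compact (for a surface group this is a classical fact: the action on triples is properly discontinuous and cocompact). (iii) The property of being Borel Anosov for $\sop$ is open, and on that open set of representations, the limit curve $\xi$, and hence also $\Theta$ and $\overline\Theta$, depend continuously on $\rho$ in the topology of uniform convergence on $\bgrf$.

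Armed with these facts, the proof proceeds as follows. Fix a Fuchsian representation $\rho_0$. By the previous proposition, for every pairwise distinct triple $(x,y,z)$, the subspaces $\Theta_{\rho_0}(z)$ and $E_p^{\rho_0}(x,y)\oplus(E_{p-1}^{o,\rho_0}(x,y)\cap\overline\Theta_{\rho_0}(y))$ are transverse. Transversality is an open condition on pairs of subspaces of complementary dimension. Combined with the continuous dependence of $\Theta,\overline\Theta$ and of the $E_i$ on both the basepoints and the representation, the set
\[
U=\{(\rho,(x,y,z)):\text{transversality \eqref{eq:trans2} holds}\}
\]
is an open subset of (a neighborhood of $\rho_0$) $\times$ (distinct triples). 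Since $U$ contains $\{\rho_0\}\times(\text{distinct triples})$, and the quotient of distinct triples by $\Gamma$ is compact, by choosing a compact fundamental domain $K$ and shrinking a neighborhood $\mathcal{U}$ of $\rho_0$ appropriately, we obtain $\mathcal{U}\times K\subset U$. By $\Gamma$-equivariance of the transversality condition, this implies $\mathcal{U}\times(\text{all distinct triples})\subset U$, which is the desired conclusion.

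The only step that requires a little care is verifying that the limit curves $\Theta,\overline\Theta$ vary continuously with $\rho$ on the open set of Anosov representations; this, however, is part of the general stability theory of Anosov representations, and in the $\sop$ setting it follows from the exponential contraction/expansion properties in Definition~\ref{def:ano}, which persist under $C^0$-small deformations of the cocycle. Once this is in hand, the rest is a straightforward compactness-and-openness argument, and there is no analytic obstruction.
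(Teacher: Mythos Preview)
Your proposal is correct and follows exactly the same approach as the paper's own proof: continuity of the limit curves with respect to the representation (the paper cites \cite{Guichard:2012eg,Bridgeman:2015ba}) together with cocompactness of the $\Gamma$-action on pairwise distinct triples in $\bgrf$, combined with the openness of the transversality condition. You have simply spelled out in detail what the paper states in one sentence.
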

	\begin{proof} This follows from the continuity of limit curves as a dependence of the representation \cite{Guichard:2012eg,Bridgeman:2015ba} and the fact that $\Gamma$ acts cocompactly on the space of triple pairwise distinct points in $\bgrf$.
\end{proof}

\subsection{The isotropic limit curves and the Smoothness Theorem}
\begin{theorem}\label{theo:smooth}{\sc [Smoothness theorem]}
Let $\sop$ be  Anosov representation  satisfying the Transversality Property \eqref{eq:trans2},  then the image of the isotropic limit curve  $\Theta$ is a smooth curve $M$. Moreover, using the identification of proposition \ref{pro:tgiso}, 
$\T_{\Theta}M=\ext^2 (E^*_{p-1}\oplus E^*_{p})$. \end{theorem}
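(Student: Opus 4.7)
The plan is to combine the local coordinate system furnished by Proposition \ref{pro:tgiso} with a proximal-bundle argument (Lemma \ref{lem:prox}) applied to the geodesic flow on the Grassmannian bundle associated with $\Theta$. The Transversality Property \eqref{eq:trans2} should guarantee that, in a chart adapted to the Anosov splitting, the $E_{p-1}^*\wedge E_p^*$ direction appears as the unique dominant line, which will force smoothness and yield the tangent identification.

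First, I would fix $x_0\in\bgrf$ together with an auxiliary point $y_0\not= x_0$ and use Proposition \ref{pro:tgiso} to identify the open subset of $\bL$ consisting of isotropic $p$-planes transverse to $\overline{\Theta}(y_0)$ with the vector space $\ext^2(\Theta(x_0)^*)$. The eigenline decomposition splits this chart as
$$
\ext^2(\Theta(x_0)^*) \;=\; \bigoplus_{1\leq i<j\leq p} E_i(x_0)^*\wedge E_j(x_0)^*.
$$
In this chart, $\Theta$ is locally encoded by a continuous map $\omega$ valued in the above direct sum. The role of the Transversality Property is to make this chart coherent across $\bgrf$: for any further point $y$, replacing $\overline{\Theta}(y_0)$ by $\overline{\Theta}(y)$ keeps the chart well-defined near $x_0$ thanks to \eqref{eq:trans2}, so the entire limit curve is covered by compatible charts varying continuously with the auxiliary data.

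Next, I would pull everything back to the unit tangent bundle $X$ and exploit the Anosov dynamics. The splitting of Definition \ref{def:ano} equips each line bundle $\cE_i^*\wedge\cE_j^*$ with a definite contraction rate under $\flo{\Phi}$, and among pairs $1\leq i<j\leq p$ the product $\lambda_i\lambda_j$ of eigenvalues of $\rho(\gamma)$ is strictly minimized at $(i,j)=(p-1,p)$; hence $\cE_{p-1}^*\wedge\cE_p^*$ is the unique slowest-contracted, strictly dominant line inside the bundle $\ext^2(\Theta^*)$ over $X$. Lemma \ref{lem:prox} should then apply: the continuous $\flo{\Phi}$-invariant section of a proximal bundle extracted from $\Theta$ is automatically smooth and valued in the dominant line subbundle. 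Translated back to $\bgrf$ via the chart above, this yields smoothness of the image $M$ together with the identification
$\T_{\Theta(x)}M = E_{p-1}(x)^*\wedge E_p(x)^* = \ext^2(E_{p-1}(x)^*\oplus E_p(x)^*)$, as claimed.

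The hard part will be checking the proximality hypothesis of Lemma \ref{lem:prox} in this setting: one must verify that the cocycle induced by $\flo{\Phi}$ on $\ext^2(\Theta^*)$ admits a uniform dominance gap between $\cE_{p-1}^*\wedge\cE_p^*$ and the remaining $\cE_i^*\wedge\cE_j^*$ components, and confirm that the Transversality Property \eqref{eq:trans2} is precisely what rules out the off-diagonal contributions in the local chart of Proposition \ref{pro:tgiso} that would otherwise obstruct the smoothness conclusion and only deliver Hölder regularity.
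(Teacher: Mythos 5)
Your overall scaffold is the right one — pass to the bundle $\ext^2(\Theta^*)$ over $X$, identify $Z\defeq E_{p-1}^*\wedge E_p^*$ as the dominant line, and invoke Lemma \ref{lem:prox}. But there is a genuine gap in how you set up the hypothesis of that lemma, and it stems from a misreading of where the Transversality Property \eqref{eq:trans2} enters.

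You assign \eqref{eq:trans2} the job of making the charts of Proposition \ref{pro:tgiso} ``coherent across $\bgrf$.'' That coherence only needs $\Theta(z)$ transverse to $\overline\Theta(y)$ for $z\not=y$, which is the ordinary Anosov transversality and has nothing to do with the finer hypothesis \eqref{eq:trans2}. The actual role of \eqref{eq:trans2} is different and is the crux of the proof: in the chart $\ext^2(\Theta^*)=Z\oplus W$ (with $W$ the sum of the other weight lines $E_i^*\wedge E_j^*$), the curve $\Gamma=\{\Theta(z)\}_{z\not=y}$ must avoid $W$ away from its basepoint. Translating $W$ back through $\hom(\Theta,\overline\Theta)$, one sees that a nonzero $f\in W$ has $f(E_p)\subset E_{p-1}^o\cap\overline\Theta$, so its graph meets $E_p\oplus(E_{p-1}^o\cap\overline\Theta)$ nontrivially; \eqref{eq:trans2} is precisely the statement that $\Theta(z)$ never does this. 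That is what forces $\Gamma$ to be, locally near the basepoint, a \emph{graph} of a continuous map $f:Z\to W$ — and only then does Lemma \ref{lem:prox} apply (the lemma is about a fiber-preserving, flow-equivariant map from the total space of $Z$ to that of $W$, not about ``a flow-invariant section being automatically smooth''). Your closing sentence gestures at ``ruling out off-diagonal contributions,'' which is in the right direction, but without identifying that the Transversality Property is exactly the assertion $\Gamma\cap W=\{0\}$ (fiberwise), the argument cannot be completed: you have nothing to which to apply Lemma \ref{lem:prox}. The proximality check, by contrast, which you flag as the ``hard part,'' is the easy step — it is immediate from the ordering of the contraction rates in Definition \ref{def:ano}.

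Once those two pieces are in place the paper's argument and yours coincide: $\Gamma$ is a graph over $Z$ of an equivariant map to $W$ vanishing at zero, Lemma \ref{lem:prox} gives differentiability at zero with zero derivative, and hence $\T_\Theta M=Z=\ext^2(E_{p-1}^*\oplus E_p^*)$ under the identification of Proposition \ref{pro:tgiso}.
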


\subsubsection{Proof of the Smoothness Theorem \ref{theo:smooth}}
%\subsubsection{A lemma about proximal bundle flows}
We will denote in general by $V_x$ the fibre at $x\in X$ of a vector bundle $V$ over a compact base $X$. Let $\flo{\varphi}$ be a flow on $X$ which lifts to a flow $\flo{\Phi}$ of bundle automorphisms on $V$. 

\begin{definition}{\sc [Proximal bundle]}
	We say the lift $\flo{\Phi}$ is {\em proximal} if there exists a continuous  $\flo{\Phi}$-invariant {\em proximal decomposition}  $V=Z\oplus W$  so that \begin{enumerate}
		\item the subbundle $Z$ has rank one,
		\item The flow contracts the subbundle $Z$ and 
	the bundle $Z^*\otimes W$,		
	\end{enumerate}
	\end{definition}
The following lemma is crucial in the smoothness part of the result.
\begin{lemma}\label{lem:prox}{\sc [Proximality and smoothness]}
Let  $\sigma$ be a  continuous section of the bundle $V$. Assume that  for all $x$ in $X$, $\sigma(x)$ does not belong to 	$W$.

Then denoting $\sigma=Z_\sigma+W_\sigma$, where $Z_\sigma$ is a section of $Z$ and $W_\sigma$ is a section of $W$, there exist a positive constants $A$ and $\lambda$ so that for all positive $t$
$$
\Vert \Phi_t(W_\sigma)\Vert\leq A\Vert\Phi_t( Z_\sigma\Vert ^{\lambda+1}\ .
$$	\end{lemma}
	\begin{proof}  Let us choose an auxiliary metric on $Z$ and $W$, since the flow is contracting on $Z$, we may reparametrise the flow so that for every $v$ in $Z$,
\begin{eqnarray}
	\Vert \Phi_t (v)\Vert=e^{-t}\Vert v\Vert\ . 	\label{eq:norm}
\end{eqnarray}
	Then the contraction property on $Z^*\otimes W$ tells that there is a positive  constants $B$  and $\lambda$ so that for all $w$ in  $Z^*\otimes W$ 
	$$
	\Vert \Phi_t(w)\Vert\leq B e^{-\lambda t} \Vert w\Vert\ .
	$$
	Thus for any $(u,v)$ in $Z\times W$, where $u$ is non zero, and all positive $t$,  
	$$
		\Vert \Phi_t(v)\Vert \leq   B e^{-\lambda t} \frac{\Vert v\Vert \cdotp \Vert \Phi_t(u)\Vert}{\Vert u\Vert} \leq A e^{-(\lambda+1)t} \Vert v\Vert \ 
	$$
Let now 
\begin{eqnarray*}
	C&\defeq& \inf \{\Vert Z_\sigma(x)\Vert\mid c\in X\}\ , \\
 	D&\defeq& \sup \{\Vert W_\sigma(x)\Vert\Vert\mid x\in X\}\ ,
\end{eqnarray*}
 	and recall that by hypothesis $C$ is positive.
 	Let now $z$ be a point in $x$, then for any positive $t$,
\begin{eqnarray*}
\Vert \Phi_t(W_\sigma(x))\Vert & \leq &B e^{-(\lambda+1)t} \Vert W_\sigma(x)\Vert\\
&\leq & B\left(\frac{\Vert \Phi_t(Z_\sigma(x))\Vert }{\Vert Z_\sigma(x)\Vert }\right)^{\lambda+1}\Vert W_\sigma(x)\Vert\\
&\leq &B DC^{-(\lambda+1)}
\Vert \Phi_t(Z_\sigma(x))\Vert ^{\lambda+1}\ .
		\end{eqnarray*}
Thus we obtain the lemma with $A=B DC^{-(\lambda+1)}$.			 \end{proof}

\subsubsection{Curves in bundles} The  limit maps $\Theta$ and $\overline\Theta$ then give rise to  two continuous,  flow invariant  maximal isotropic and  transverse  subbundles (also denoted $\Theta$ and $\overline\Theta$  of $E_\rho$.  We see these subbundles as sections, also denoted $\Theta$ and $\overline\Theta$ of  $\mathcal L_\rho$ the associated bundle over $X$ to the Grassmannian of totally isotropic planes $\bL$ in $E$

Let us choose an orientation on $\Sigma$ and thus a complex structure associated to the hyperbolic structure, as well as an orientation on $\partial_\infty\pi_1(\Sigma)$.
 
From hyperbolic geometry, we have a $\Gamma$ equivariant map  $h$ from  the unit bundle $\mathbf H^2$ of $\Sigma$ to $\partial_\infty\Gamma$, which associates to a unit vector $u$, the end point of the geodesic given by $Ju$, where $J$ is the complex structure on $\Sigma$ associated to the hyperbolic metric.

Thus we obtain a section $\sigma$ of $\mathcal L_\rho$ given by
$$
\sigma(u)\defeq \Theta(h(u))\ .
$$
For a representation close to be Fuchsian, since $\Theta(z)$ is transverse to $\overline\Theta(w)$ is $z\not=w$ by the Anosov property, we will consider $\sigma$ as  a section of the vector bundle 
$$
\mathcal T\defeq\T_{\Theta}\bL\subset \hom(\Theta,\overline\Theta)\ .
$$ that we freely  identify with $\ext^2(\Theta^*)$, using proposition \ref{pro:tgiso} by an identification that respects the lifts of the flow. Then we have 
\begin{proposition} 
	The decomposition $\mathcal T=Z\oplus W$ is a proximal vector bundle decomposition where 
	 \begin{eqnarray}
Z\defeq \ext^2\left(E_{p-1}^*\oplus  E_p^*\right)\ , & &
W\defeq\{\omega\in \ext^2(\Theta^*)\mid \left.\omega\right\vert_{E_p\oplus E_{p-1}}=0\}\ .
\end{eqnarray}

\end{proposition}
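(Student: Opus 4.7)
The plan is to verify the three defining features of a proximal decomposition: that $Z$ is a rank-one $\Phi$-invariant subbundle, that $W$ is a $\Phi$-invariant complement of $Z$ in $\mathcal T$, and that $\Phi$ contracts both $Z$ and $Z^{*}\otimes W$.

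The algebraic splitting is the easy step. Along the Anosov decomposition $\Theta=E_{1}\oplus\cdots\oplus E_{p}$, the exterior square splits canonically as $\ext^{2}\Theta^{*}=\bigoplus_{i<j}E_{i}^{*}\wedge E_{j}^{*}$. The restriction of a $2$-form to $E_{p-1}\oplus E_{p}$ reads off precisely its component on $E_{p-1}^{*}\wedge E_{p}^{*}$, so the defining condition of $W$ carves out the complementary sum $\bigoplus_{(i,j)\neq(p-1,p)} E_{i}^{*}\wedge E_{j}^{*}$. Hence $Z=E_{p-1}^{*}\wedge E_{p}^{*}$ is a line bundle and $\mathcal T=Z\oplus W$ is an honest direct sum; continuity and $\Phi$-invariance of both summands are inherited from those of each $E_{i}$.

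For the contraction statements I would work in terms of logarithmic expansion rates. Fix continuous metrics on the $E_{i}$ and define $\alpha_{i}(x,t)$ by $\Vert\Phi_{t}v\Vert=e^{\alpha_{i}(x,t)}\Vert v\Vert$ for $v\in(E_{i})_{x}$. Then $\Phi_{t}$ acts on $Z$ with rate $-\alpha_{p-1}-\alpha_{p}$, on the summand $E_{i}^{*}\wedge E_{j}^{*}$ of $W$ with rate $-\alpha_{i}-\alpha_{j}$, and hence on $Z^{*}\otimes(E_{i}^{*}\wedge E_{j}^{*})$ with rate $(\alpha_{p-1}+\alpha_{p})-(\alpha_{i}+\alpha_{j})$. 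The two required contractions thus reduce to the uniform inequalities
\begin{equation*}
\alpha_{p-1}+\alpha_{p}>0 \qquad\text{and}\qquad \alpha_{p-1}+\alpha_{p}<\alpha_{i}+\alpha_{j} \quad\text{for all other pairs }i<j\leq p.
\end{equation*}
For a Fuchsian representation $\rho=J\circ\nu$ the description of the principal $\sld$-embedding above gives $(E_{i})_{x}$ the eigenvalue $\lambda^{2p-2i}$ at a periodic point of length $\ell_{\gamma}$, so one computes $(\alpha_{p-1}+\alpha_{p})\ell_{\gamma}=2\log\lambda$ and $(\alpha_{i}+\alpha_{j})\ell_{\gamma}\geq 4\log\lambda$ for every other pair, giving both inequalities with a strict, uniform gap. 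Because the Anosov line bundles and the flow's action on them depend continuously on $\rho$, the same strict inequalities survive for every representation sufficiently close to Fuchsian, which is the standing hypothesis of the section.

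The main obstacle I expect is upgrading these orbitwise eigenvalue gaps to the uniform exponential contractions demanded by the definition of a proximal bundle. For this step I would invoke the standard fact that on a compact base a continuous flow-invariant line-bundle splitting with strictly ordered expansion rates at every periodic orbit admits a continuous metric witnessing uniform exponential contraction---this is the very mechanism that yields the Anosov contractions on each $E_{i}$, now applied to the induced bundles $Z$ and $Z^{*}\otimes W$. With this in place the proximality of $\mathcal T=Z\oplus W$ is established.
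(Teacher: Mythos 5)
Your approach matches the paper's (whose proof is a two-line sketch: the flow contracts $Z$, and by the Anosov property contracts $\ext^2(E_{p-1}\oplus E_p)$ less than the other summands $\ext^2(E_i\oplus E_j)$), but you make the computation explicit, which is useful. The algebraic splitting, the reduction to the inequalities $\alpha_{p-1}+\alpha_p>0$ and $\alpha_{p-1}+\alpha_p<\alpha_i+\alpha_j$, and the verification of a uniform gap at the Fuchsian point via the principal $\sld$-weights $\lambda^{2p-2i}$ are all correct and are what the paper's ``by the Anosov property'' is compressing.

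The one genuine weak point is the last paragraph. The ``standard fact'' you invoke --- that strictly ordered periodic-orbit eigenvalue gaps on a compact base automatically promote to a uniform exponential contraction --- is not a fact one gets to cite without hypotheses; it typically requires a closing-lemma or Livschitz-type argument and the density of periodic orbits, and stated at the level of generality you use it is false. You do not actually need it here: at the Fuchsian point the splitting of $\cE_\rho$ comes from the root-space decomposition of the principal $\slt$, and $\Phi_t$ acts on each $E_i$ by the \emph{exact} rate $\alpha_i$ along \emph{every} orbit, not just the periodic ones, so the contractions of $Z$ and $Z^*\otimes W$ are uniform by direct computation. Uniform exponential contraction of a continuous invariant splitting over a compact base is an open condition under $C^0$-perturbation of the flow on the bundle, and the bundles $E_i$ vary continuously with $\rho$ (as you note), so the proximal-bundle property passes to every $\rho$ sufficiently close to Fuchsian --- which is the standing hypothesis. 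Replacing your appeal to the ``standard fact'' by this direct computation plus openness closes the gap and is, I believe, what the paper's terse proof has in mind.
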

\begin{proof} By the Anosov property,   the flow contracts
 $$
 \ext^2(\Theta^*)=\bigoplus_{p\geq i>j}\ext^2(E_i^*\oplus E^*_j)
 \ ,
 $$ 
 and furthermore contracts less on $\ext^2(E^*_{p}\oplus E^*_{p-1})$ than on $\ext^2(E^*_i\oplus E^*_j)$ when $i<j$ and $j>p$.
	 \end{proof}
\begin{lemma}
For all $x$ in $X$, $\sigma(x)$ does no belong to $W_x$.
\end{lemma}  
\begin{proof} In the identification $\ext^2(\Theta)= \T\bL\subset\hom(\Theta,\overline\Theta)$, $W$ is a subset of 
$$
W_0\defeq \{f\mid \forall (u,v)\in E_{p-1}\times E_{p}\ . \ \  q(u,f(v))=q(v,f(u))=0\}
$$
But if $f\in W_0$, then 
$
f(E_{p})$ is included in $E^o_{p-1}\cap \overline\Theta
$.
Thus the graph of $f$ has an intersection of positive dimension with
$E_{p}\oplus (E^o_{p-1}\cap \overline\Theta)$.
It follows from the  third statement of proposition \ref{pro:trans} that $\sigma(x)$ does not belong $W$ for all $x$ in $X$.  \end{proof}
As a corollary of our proximal and smoothness lemma \ref{lem:prox} we now get, denoting $u$ the section of $Z$ of norm 1 which is so that  for all $t$, 
$$ u=\frac{\Phi_t(Z_\sigma(\varphi_{-t}x))}{\Vert\Phi_t(Z_\sigma(\varphi_{-t}x))\Vert}\ .$$ 
Observe that $u$ gives rise to an orientation of $Z$.
\begin{corollary}\label{cor:AWZ}
There exist positive constants $A$ and $\lambda$,  such that for all positive $t$, 
$$
\Vert \Phi_t(W_\sigma)\Vert\leq A\Vert\Phi_t( Z_\sigma\Vert ^{\lambda+1}\ .
$$
And in particular
$$
\lim_{t\to\infty}\left(\frac{\Phi_t(\sigma(\varphi_{-t}x))}{\Vert\Phi_t(\sigma(\varphi_{-t}x))\Vert}\right)=u \ .
$$
\end{corollary}

We now explain how this corollary implies the Smoothness Theorem \ref{theo:smooth}.
\begin{proof}[Proof of Theorem \ref{theo:smooth}]  
Let us also choose some auxiliary Riemannian metric and let $U$ be the unit vector field in
along $C\defeq\Theta(\partial_\infty\pi_1(\Sigma))$ so that $u_x$ is tangent at  $\Theta(x)$  to 
$$
Z_x\defeq \ext^2(E^*_{p-1}(x,y)\oplus E^*_{p-1}(x,y))\subset \T_{\Theta(x)}\bL\ ,
$$ 
and inducing the orientation compatible with that coming from $u$.

Observe that $u_x$ is continuous in $x$.  Then by corollary  \ref{cor:AWZ}
$$
\lim_{z\to x^+}\frac{\Theta(x)-\Theta(z)}{\Vert \Theta(x)-\Theta(z)\Vert}=u\ .
$$
Taking the opposite orientation on $\Sigma$ we obtain symmetrically
$$
\lim_{z\to x^-}\frac{\Theta(x)-\Theta(z)}{\Vert \Theta(x)-\Theta(z)\Vert}=-u\ .
$$
It follows that $C$ is a $C^1$ curve whose tangent space at $x$ is $Z_x$.
\end{proof}

\subsection{The last root flow and the Entropy Theorem}
We also have \cite[proposition 2.4]{BCLS} the following result
\begin{proposition}
	For $\rho$ with values in $\sop$ close to a Hitchin representation in $\mathsf{SO}(p,p-1)$, there exists a reparametrisation $\flo{\psi}$, called the {\em last root flow}, of $\flo{\varphi}$ so that the length of the closed orbit of $\psi$ associated to $\gamma$ is $\log\lambda_p(\rho(\gamma))+\log\lambda_{p-1}(\rho(\gamma))$.
\end{proposition}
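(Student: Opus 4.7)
The plan is to construct $\flo\psi$ as the reparametrization of $\flo\varphi$ by a H\"older function $f\colon X\to \mathbb R$ whose integral over the closed orbit $C_\gamma$ equals $\log\lambda_p(\rho(\gamma))+\log\lambda_{p-1}(\rho(\gamma))$, following the standard recipe associating reparametrizations of the geodesic flow to Anosov representations (cf.\ \cite{Guichard:2012eg,Bridgeman:2015ba}).

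I would first exploit the splitting of $\cE_\rho$: the line bundles $\cE_p$ and $\cE_{p-1}$ are H\"older continuous and $\flo\Phi$-invariant. Picking continuous trivializing unit sections and continuous metrics, the scalar cocycles controlling the $\flo\Phi$-action on each line admit H\"older infinitesimal generators $f_p, f_{p-1}\colon X\to \mathbb R$. Since $\Phi_{\ell_\gamma}$ acts on $(\cE_i)_x$ as multiplication by $\lambda_i(\rho(\gamma))$, and all relevant eigenvalues are positive for Hitchin representations and their small perturbations, one has
$$
\int_0^{\ell_\gamma} f_i(\varphi_s x)\,ds \;=\; \log\lambda_i(\rho(\gamma)),\qquad i\in\{p-1,p\}.
$$
Setting $f\defeq f_p+f_{p-1}$ then gives a H\"older function with the required periods.

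The next step is to show that $f$ is cohomologous to a strictly positive H\"older function. For a Hitchin representation in $\mathsf{SO}(p,p-1)$, one has $\lambda_p(\rho(\gamma))=1$ and $\lambda_{p-1}(\rho(\gamma))>1$ for every $\gamma\neq e$, so every period of $f$ is strictly positive; by the standard Livsic-type result for transitive Anosov flows, positive periods force $f$ to be cohomologous to a strictly positive H\"older function. Since the Anosov bundle data, and hence the H\"older cohomology class of $f$, depend continuously on $\rho$, and the cone of positive H\"older classes is open, $f$ remains cohomologous to a positive function for every $\sop$-representation $\rho$ in a neighborhood of the Hitchin locus of $\mathsf{SO}(p,p-1)$.

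Finally, I would define $\flo\psi$ in the usual way by $\psi_t(x)=\varphi_{\tau(t,x)}(x)$, with $\tau$ implicitly determined by $\int_0^{\tau(t,x)} f(\varphi_s x)\,ds = t$. Closed orbits of $\flo\psi$ then correspond bijectively to those of $\flo\varphi$, and the length of the $\psi$-orbit associated to $\gamma$ is $\int_0^{\ell_\gamma} f(\varphi_s x)\,ds = \log\lambda_p(\rho(\gamma))+\log\lambda_{p-1}(\rho(\gamma))$, as required. The only real subtlety is the positivity of $f$: neither $f_p$ nor $f_{p-1}$ is individually positive close to Hitchin (the $p$-th eigenvalue hovers around $1$ and can oscillate on either side), so the argument genuinely requires packaging both contributions together and using the Hitchin model as the reference point for positivity of the periods.
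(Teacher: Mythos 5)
The paper offers no proof of this proposition: it is simply cited to \cite{Guichard:2012eg,Bridgeman:2015ba}, so you are reconstructing the argument that those references supply. Your reconstruction is the standard one and is essentially correct — build a H\"older function $f$ with periods $\log\lambda_p(\rho(\gamma))+\log\lambda_{p-1}(\rho(\gamma))$ from the cocycles on the invariant line bundles, show $f$ is Livsic-cohomologous to a strictly positive function, and reparametrize by the standard $\tau$.

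Two points are worth tightening. First, the Livsic-type criterion you invoke is not quite ``positive periods imply cohomologous to positive''; one needs the periods to be \emph{uniformly} positive, i.e.\ $\int_\gamma f \geq c\,\ell_\gamma$ for some $c>0$. Mere strict positivity of individual periods is not a priori sufficient. But you do not actually need the detour through the Hitchin model and an openness argument to get this: the uniform bound is built into Definition~\ref{def:ano}. The contraction of $\hom(\overline\cE_p,\cE_{p-1})$ gives precisely $\lambda_p(\rho(\gamma))\,\lambda_{p-1}(\rho(\gamma))\geq e^{c\ell_\gamma}$ for every $\gamma$, so $\int_\gamma f\geq c\ell_\gamma$ holds for every representation that is Anosov in the sense of the paper, without any appeal to the Hitchin reference point or to openness of the positive cone. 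Second, the closing remark about ``packaging both contributions together'' is a bit of a red herring: the choice $f=f_p+f_{p-1}$ is forced by the required period $\log\lambda_p+\log\lambda_{p-1}$ (it is the value of the last root $\epsilon_{p-1}+\epsilon_p$ of $\sop$ on the Jordan projection, hence the name), not by any positivity consideration — and once you see that the Anosov contraction of $\hom(\overline\cE_p,\cE_{p-1})$ already gives the uniform lower bound, positivity of this combination is immediate, not subtle.
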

Indeed, we observe the real line bundle $Z= \ext^2(E^*_{p-1}\oplus E^*_{p-1})$ is contracted by the flow and its {\em contraction spectrum} associates to the closed orbit $\gamma$ the number $\log\lambda_p(\rho(\gamma))+\log\lambda_{p-1}(\rho(\gamma))$.

The entropy theorem is now properly stated as
\begin{theorem}{\sc [Entropy Theorem]}\label{theo:entrop}
	For $\rho$ close enough to a Hitchin representation in $\mathsf{SO}(p,p-1)$, the entropy of the last root flow is equal to 1.
\end{theorem}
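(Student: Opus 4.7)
The plan is to follow the Potrie--Sambarino strategy \cite{Potrie:2014uta} and convert the entropy statement into a Hausdorff-dimension computation for the image of the isotropic limit curve $\Theta$, exploiting that this image is a $C^1$ submanifold by the Smoothness Theorem \ref{theo:smooth}.

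First, by Corollary \ref{pro:trans} the transversality property \eqref{eq:trans2} holds in a neighbourhood of the Fuchsian locus, so Theorem \ref{theo:smooth} applies: the set $M\defeq\Theta(\bgrf)$ is a $C^1$ one-dimensional submanifold of $\bL$ with tangent bundle $\ext^2(E_{p-1}^*\oplus E_p^*)\subset\ext^2(\Theta^*)$ in the identification of Proposition \ref{pro:tgiso}. A direct linear-algebra computation then shows that the differential of $\rho(\gamma)$ at its attracting fixed point $\Theta(\gamma^+)\in M$, restricted to this tangent line, has eigenvalue $(\lambda_{p-1}(\rho\gamma)\,\lambda_p(\rho\gamma))^{-1}=e^{-\ell_\psi(\gamma)}$. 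Pulling back the $C^1$ arc-length on $M$ through the homeomorphism $\Theta\colon\bgrf\to M$ therefore yields a metric $d_\psi$ on $\bgrf$ for which each $\rho(\gamma)$ is an approximate conformal contraction near $\gamma^+$ with rate $e^{-\ell_\psi(\gamma)}$.

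Second, because $M$ is a $C^1$ topological circle inside the smooth manifold $\bL$, its Hausdorff dimension for arc-length is exactly $1$; equivalently $\dim_H(\bgrf,d_\psi)=1$. A Patterson--Sullivan/Bishop--Jones argument, in the spirit of \cite{Potrie:2014uta}, then identifies this Hausdorff dimension with the critical exponent of the Poincaré series $\sum_\gamma e^{-s\ell_\psi(\gamma)}$, which by definition equals the topological entropy $h_\psi$ of the last root flow. This yields $h_\psi=1$, as claimed.

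The main technical obstacle will be this final identification of Hausdorff dimension with entropy. It requires H\"older-type control relating the intrinsic metric $d_\psi$ on $\bgrf$ to the dynamically defined shadow balls around the attracting fixed points $\gamma^+$; such control should follow from the exponential contraction on the line bundles $\cE_i$ provided by the Anosov hypothesis (Definition \ref{def:ano}), combined with a quantitative version of the proximality estimate of Lemma \ref{lem:prox}. Carrying this out carefully, so that the contraction rates produced by Lemma \ref{lem:prox} match uniformly the norms entering the shadow-lemma estimate, is the nontrivial heart of the argument.
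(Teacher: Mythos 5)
Your proposal correctly identifies the core mechanism — transversality (Corollary~\ref{pro:trans}) gives the Smoothness Theorem~\ref{theo:smooth}, whence the image of $\Theta$ is a $C^1$ curve whose tangent is $\ext^2(E_{p-1}^*\oplus E_p^*)$, on which $\rho(\gamma)$ contracts at the rate $\lambda_{p-1}(\rho\gamma)\lambda_p(\rho\gamma)$ matching the last root flow periods. That much agrees with the paper, which also follows Potrie--Sambarino.

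There are two issues. First, you substitute a Hausdorff-dimension/Patterson--Sullivan/Bishop--Jones route for what Potrie--Sambarino (and the paper) actually invoke, namely the SRB-measure argument: since the $C^1$ smoothness of the limit curve makes the unstable distribution of the reparametrised flow Lipschitz, the SRB measure coincides with the measure of maximal entropy, and the Pesin/Ledrappier--Young entropy formula gives $h=1$ directly. Your alternative is plausible in spirit, but you yourself flag that matching the shadow-lemma estimates to the contraction rates from Lemma~\ref{lem:prox} is the ``nontrivial heart'' and leave it unaddressed; the paper's SRB route avoids precisely this difficulty by staying on the dynamical side rather than passing through conformal-density machinery, so you would need to fill in a substantial technical argument that the paper does not.

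Second — and this is a genuine gap — your argument only proves the statement for $\rho$ near the \emph{Fuchsian} locus, since Corollary~\ref{pro:trans} (and hence Theorem~\ref{theo:smooth}) is established only there, whereas the theorem asserts the conclusion for $\rho$ close to \emph{any} Hitchin representation in $\mathsf{SO}(p,p-1)$. The paper closes this gap with a final step your proposal omits entirely: the entropy of the last root flow is an analytic function of the representation (by the thermodynamic formalism of Bridgeman--Canary--Labourie--Sambarino), so its being identically $1$ near the Fuchsian locus forces it to be constantly $1$ on the whole connected neighbourhood of the Hitchin component. Without this analytic-continuation step, your argument does not reach the stated conclusion.
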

This theorem is also a consequence  to \cite{Pozzetti:2019uc}, also using a fundamental idea due to Potrie and Sambarino \cite{Potrie:2014uta}.

\begin{proof}
	We follow closely  Potrie and Sambarino \cite{Potrie:2014uta}, to obtain a proof of the Entropy Theorem \ref{theo:entrop}.
We observe that the if $\gamma_+,\gamma_-$ are respectively the attractive and repulsive fixed points of $\gamma$ on $\bgrf$, then $\Xi_q(\gamma_+,\gamma_-)$ is a fixed point of $\rho(\gamma)$ in $M_q$ and that its rate of contraction is given by $\lambda_p(\gamma)\cdotp\lambda_{p-1}(\gamma)$ on $\ext^2(E^*_{p}\oplus E^*_{p-1})$. 
The same discussion as in Potrie--Sambarino using SRB measures gives us the result in the neighbourhood of the Fuchsian representation by Corollary \ref{pro:trans} and Theorem \ref{theo:smooth} since the isotropic limit curve is $C^1$. Finally, as in \cite{Potrie:2014uta}, the analyticity of the entropy obtained in \cite{Bridgeman:2015ba} implies that the entropy is constant and equal to 1 on the neighbourhood of the Hitchin representations in $\mathsf{SO}(p,p-1)$. \end{proof}

It seems likely that all representations in the Hitchin component for $\mathsf{SO}(p,p)$ are Anosov, in which case the previous theorem applies to the whole Hitchin component.

\subsection{Entropy and the Affine Action Theorem}
\subsubsection{Affine group and quadratic forms}\label{para:inter} Let us consider a representation $\rho:\gamma\to\rho_\gamma$ of a surface group $\Gamma$ in the affine group of $E$ whose linear part $\rho^0$ is a Hitchin representation is in $\mathsf{SO}(p,p-1)$. We describe  the translation part by $\omega\in H^1_{\rho^0}(E)$, defined by the cocycle $\gamma\mapsto\omega_\gamma\defeq\rho(\gamma)(0)$.  Let $L$ be a one-dimensional vector space generated by a vector $f$.
Let $\braket{\mid}$ be the quadratic form on $E\oplus L$ , given by $\braket{u+xf \mid u+xf}=Q(u)-x^2$ of signature $(p,p)$.

The corresponding embedding of $\mathsf{SO}(p,p-1)$ to $\sop$ is so that have the $\mathsf{SO}(p,p-1)$ invariant decomposition
x$$
\T_{\operatorname{id}} \sop=\T_{\operatorname{id}} \mathsf{SO}(p,p-1)\oplus E\  .
$$
Accordingly,  we  consider  $\rep(\Gamma,\mathsf{SO}(p,p-1)$ as a subset of $\rep(\Gamma,\sop)$ and we identify  
$H^1_{\rho}(E)$ as a vector subspace of $\T_{\rho^0}\rep(\Gamma,\sop)$.

We know define this isomorphism more explicitly. We represent elements $\dot\rho$ in $\T_{\rho^0}\rep(\Gamma,\sop)$ by coholomogy class of cocycles 
$
\dot\rho:\gamma\mapsto\dot\rho_\gamma
$.
Let then $\mathcal H$ be the subset of $\T_{\rho^0}\rep(\Gamma,\sop)$ defined by 
\begin{eqnarray*}
	\mathcal H&\defeq&\{\dot\rho\mid \forall u,v\in E, \ \ \forall \gamma\in  \Gamma \ \ \braket{\dot\rho_\gamma(u)\mid v}=0\}\ ,\\
\end{eqnarray*}
\begin{proposition}{\sc [Interpretation]}\label{pro:iden} The map $\dot\rho\mapsto \omega$, where $\omega$ is defined by 
	$$
	\forall v\in E, \ \ Q(\omega_\gamma,v)=\braket{\dot\rho_\gamma(e_{f})\mid\rho(\gamma)(v)}
	$$
	is an isomorphism between $\mathcal H$ and $H^1_{\rho}(E)$.\end{proposition}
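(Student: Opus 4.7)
The plan is to realize $\mathcal{H}$ as cocycles valued in a canonical $\mathsf{SO}(p,p-1)$-invariant complement $\mathfrak{a}$ to $\mathfrak{so}(p,p-1)$ inside $\mathfrak{so}(p,p)$, identify $\mathfrak{a}$ with $E$ equivariantly, and then conclude by functoriality of cohomology.

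First I would pin down $\mathfrak{a}$ algebraically. An element $A\in\mathfrak{so}(p,p)$ satisfies $\braket{Au\mid v}=0$ for all $u,v\in E$ if and only if its $E\to E$ block vanishes, i.e.\ $A(E)\subset L$; combined with antisymmetry of $A$ with respect to $\braket{\cdot\mid\cdot}$ and $\braket{f\mid f}=-1$, this forces $A$ to be entirely determined by the vector $w\defeq A(f)\in E$, via $A(u)=Q(w,u)f$ for $u\in E$ and $A(f)=w$. Writing $A_w$ for this element gives a linear isomorphism $E\xrightarrow{\sim}\mathfrak{a}$. Since the embedding of $\mathsf{SO}(p,p-1)$ into $\sop$ fixes $f$, a direct check yields $\operatorname{Ad}(\rho^0(\gamma))A_w=A_{\rho^0(\gamma)w}$, so the isomorphism is $\mathsf{SO}(p,p-1)$-equivariant and $\mathfrak{so}(p,p)=\mathfrak{so}(p,p-1)\oplus\mathfrak{a}$ is a splitting of $\mathsf{SO}(p,p-1)$-modules.

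By construction, $\mathcal{H}$ consists of cohomology classes represented by cocycles valued in $\mathfrak{a}$. Because the decomposition of coefficients is $\Gamma$-equivariant via $\operatorname{Ad}\circ\rho^0$, it descends to a splitting in cohomology, yielding a canonical isomorphism $\mathcal{H}\xrightarrow{\sim}H^1(\Gamma,\mathfrak{a})\xrightarrow{\sim}H^1(\Gamma,E)=H^1_{\rho^0}(E)$. Concretely the underlying map sends $\dot\rho$ to the cocycle $\gamma\mapsto w_\gamma\defeq\dot\rho_\gamma(f)$; the identity $w_{\gamma\delta}=w_\gamma+\rho^0(\gamma)w_\delta$ follows by applying $\dot\rho_{\gamma\delta}=\dot\rho_\gamma+\operatorname{Ad}(\rho^0(\gamma))\dot\rho_\delta$ to $f$ and using $\rho^0(\gamma)f=f$. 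Injectivity is immediate from the reconstruction $\dot\rho_\gamma=A_{w_\gamma}$, and surjectivity is the reverse construction.

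To match the proposition's explicit formula, I substitute $\dot\rho_\gamma(f)=w_\gamma$ to obtain $\braket{\dot\rho_\gamma(f)\mid\rho(\gamma)(v)}=Q(w_\gamma,\rho^0(\gamma)v)$ for $v\in E$, and non-degeneracy of $Q$ on $E$ then recovers $\omega_\gamma$ as a specific $\rho^0$-twist of $w_\gamma$; a short calculation checks that the twisted cocycle is precisely the affine translation cocycle representing the class in $H^1_{\rho^0}(E)$. The main obstacle is essentially bookkeeping: the linear algebra decomposition is transparent once spotted, but one must track the cocycle conventions carefully to see that the $\rho(\gamma)$-twist in the formula produces an honest cocycle in the convention used to define $H^1_{\rho^0}(E)$, rather than a right-cocycle or a twisted variant.
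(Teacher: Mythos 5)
The paper states Proposition~\ref{pro:iden} without supplying a proof, so there is no argument in the text to compare against; I will assess your proposal on its own terms.

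Your overall strategy is the natural one and the core linear algebra is correct. The identification of the invariant complement $\mathfrak{a}\subset\mathfrak{so}(p,p)$ is right: if $A\in\mathfrak{so}(p,p)$ kills the $E$--$E$ block, antisymmetry with respect to $\braket{\cdot\mid\cdot}$ together with $\braket{f\mid f}=-1$ forces $A(u)=Q(w,u)f$ for $u\in E$ and $A(f)=w$, where $w=A(f)\in E$, and the computation $\operatorname{Ad}(\rho^0(\gamma))A_w=A_{\rho^0(\gamma)w}$ (using $\rho^0(\gamma)f=f$ and $Q$-orthogonality of $\rho^0(\gamma)$ on $E$) does show that $w\mapsto A_w$ is an $\mathsf{SO}(p,p-1)$-equivariant isomorphism $E\xrightarrow{\ \sim\ }\mathfrak a$. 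Passing to cohomology through this splitting of coefficient modules then yields $\mathcal H\cong H^1(\Gamma,\mathfrak a)\cong H^1_{\rho^0}(E)$, which is precisely the content of the proposition, and your reconstruction $\dot\rho_\gamma=A_{w_\gamma}$ gives both injectivity and surjectivity. You also verify correctly that $w_\gamma\defeq\dot\rho_\gamma(f)$ obeys $w_{\gamma\delta}=w_\gamma+\rho^0(\gamma)w_\delta$.

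The one soft spot is the step you yourself flag and then wave away as a ``short calculation'': you find $\omega_\gamma=\rho^0(\gamma)^{-1}w_\gamma$ from the stated formula and assert that this twisted family is ``precisely the affine translation cocycle.'' It is not, in the convention the paper uses elsewhere. The translation part $\omega_\gamma\defeq\rho(\gamma)(0)$ satisfies $\omega_{\gamma\delta}=\omega_\gamma+\rho^0(\gamma)\omega_\delta$, whereas $\rho^0(\gamma)^{-1}w_\gamma$ satisfies $\omega_{\gamma\delta}=\rho^0(\delta)^{-1}\omega_\gamma+\omega_\delta$; these two cocycle identities are related by the standard involution $c\mapsto -\rho^0(\cdot)^{-1}c$ (equivalently $\gamma\mapsto\gamma^{-1}$), so they define isomorphic cohomology groups, but they are not literally the same cocycle. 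In other words, the map in the proposition is indeed an isomorphism onto $H^1_{\rho^0}(E)$, but the identification passes through one more involution than you suggest (or else the paper implicitly uses the right-trivialized convention for $\dot\rho_\gamma$ and/or the opposite cocycle convention for $H^1$, in which case the formula is consistent but your untwisted $w_\gamma$ is not). Since you advertise this as the ``main obstacle,'' you should actually carry out the bookkeeping rather than assert its conclusion; doing so would have revealed the discrepancy and forced you to either record the extra normalization or to adopt the matching convention for $\dot\rho$.
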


\subsubsection{Margulis invariant}
Let $\rho$ be a representation of $\Gamma$ in the affine  group, $\rho^0$ its linear part assumed to be a Hitchin representation in $\mathsf{SO}(p,p-1)$ and $\omega$ the affine deformation, that we see as a (closed) form in $\Omega^1(X,\cV_{0})$, where $\cV_{0}$ is the flat bundle on $X$  associated to $\rho^0$. By section \ref{sec:hitch}  we have the flow  invariant decomposition: 
$$
\cV_{0}=\sum_{i=1}^{2p-1}\cV_i\ .
$$
Let $\epsilon_p$ be the section of norm 1 of the spacelike line bundle $E_p$.  Let us choose a parametrisation of the geodesic flow, with generator $X$. Let $\mu$ be a measure invariant by the geodesic flow. 
We define as in \cite{Labourie:2001,Goldman:2008} the diffusion
$$
 M(\mu)\defeq\int_{\mathsf{U}S} Q(\epsilon_p, \omega(X))\  {\rm d} \mu \ .
 $$
One may notice that one could get rid of the choice of the parametrisation by working with invariant currents.

 Let $\flo{\rho}$ be a family of representations of $\Gamma$ in $\mathsf{SO}(p,p)$ associated to $\rho$, according to our Interpretation proposition \ref{pro:iden}, so that 
$$
\diffo \rho_t=\omega \ , \ \ \rho^0=\rho_0\ .
$$
For $t$ close to zero, $\rho_t$ is  close to a Hitchin representation in $\mathsf{SO}(p,p-1)$ (hence Borel Anosov in  $\sop$ by proposition \ref{anoano}) and thus also Borel Anosov \cite{Labourie:2006,Guichard:2012eg}. We can decompose the associated bundle as in Definition  \ref{def:ano} in $$
\cE_{\rho_t}=\bigoplus_{i=1}^p \cE^t_i\oplus \bigoplus_{i=1}^p \overline\cE^t_i \  .
$$ 
This decomposition is given by the limit curves. Since they depend  analytically on the representation \cite[Theorem 6.1]{Bridgeman:2015ba}, we may choose an identification of $\cE_{\rho_t}$ with $\cV_0\oplus L$, where $L$ is the trivial bundle such that furthermore
\begin{enumerate}
\item the quadratic form is constant,
\item the bundles $\cE^t_i$ and $\overline\cE^t_i$ are constant and thus denoted $\cE_i$ and $\overline\cE_i$
\item  Finally  $\cE_i=\cV_i$, $\overline \cE_i=\cV_{2p-i+1}$, for $i<p$, $\cE_p$ and $\overline \cE_p$ are the lightlike lines in $\cV_p\oplus L$.
\end{enumerate}
Let $\mu_\gamma$ be the  current represented to a closed orbit associated by a non trivial element $\gamma$ of $\Gamma$. The next lemma is a generalisation of  \cite[Lemma 2]{Goldman:2000a}.
\begin{lemma}\label{lem:marg}The variation of the eigenvalues are given as follows:
\begin{eqnarray*}
\diffo\lambda_p(\rho_t(\gamma))= \frac{1}{2}M(\mu_\gamma)\ & ,&  
\hbox{ for } i<p,\ \ \diffo \lambda_i(\rho_t(\gamma))= 0\ .
\end{eqnarray*}
\end{lemma}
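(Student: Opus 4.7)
The plan is to exploit the $t$-independent trivialization chosen just before the statement, in which both $\braket{\cdot\mid\cdot}$ and the Anosov sub-bundles $\cE_i,\overline\cE_i$ are gauge-constant. In this trivialization each line $\cE_i$ is $\rho_t(\gamma)$-invariant for every $t$, so choosing generators $v_i\in\cE_i$ and $\overline v_i\in\overline\cE_i$ at a point of the closed orbit $C_\gamma$, the identity $\rho_t(\gamma)v_i=\lambda_i(\rho_t(\gamma))v_i$ differentiates at $t=0$ and pairs with $\overline v_i$ into the one-line first-order perturbation formula
$$\diffo\lambda_i(\rho_t(\gamma))\,\braket{v_i\mid\overline v_i}=\braket{\dot\rho_\gamma v_i\mid\overline v_i}.$$
This is the common starting point for both assertions; in the degenerate case $i=p$, where $\rho^0(\gamma)$ has a $2$-dimensional fixed plane $\cV_p\oplus L$, the same formula still applies provided $v_p,\overline v_p$ are chosen to diagonalize the first-order degenerate perturbation, which is exactly what the lightlike generators $\epsilon_p\mp f$ achieve.

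For $i<p$ the generators $v_i=\epsilon_i\in\cV_i$ and $\overline v_i=\overline\epsilon_i\in\cV_{2p-i}$ both lie in the $(2p-1)$-dimensional subspace $V$. By Proposition \ref{pro:iden}, the deformation class $\dot\rho$ is represented by a cocycle inside $\mathcal H$, i.e.\ one satisfying $\dot\rho_\gamma(V)\subset L=V^\perp$. Since $L\perp V$ for the ambient signature-$(p,p)$ form, the right-hand side of the perturbation formula vanishes identically and the second identity follows at once.

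For $i=p$ one takes $v_p=\epsilon_p-f$ and $\overline v_p=\epsilon_p+f$. Expanding $\braket{\dot\rho_\gamma(\epsilon_p-f)\mid\epsilon_p+f}$, the two ``diagonal'' terms vanish by $V\perp L$; the skew-adjointness of $\dot\rho_\gamma\,\rho^0(\gamma)^{-1}\in\mk{so}(p,p)$ combined with the $\rho^0(\gamma)$-invariance of both $\epsilon_p$ and $f$ forces the two remaining off-diagonal contributions to coincide up to sign, so they reinforce rather than cancel. Applying Proposition \ref{pro:iden} with $v=\epsilon_p$ then identifies the surviving term with a multiple of $Q(\omega_\gamma,\epsilon_p)$, and dividing by the normalization $\braket{\epsilon_p-f\mid\epsilon_p+f}$ produces the announced first variation $\tfrac12\,Q(\epsilon_p,\omega_\gamma)$.

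Finally, to match $Q(\epsilon_p,\omega_\gamma)$ with $M(\mu_\gamma)$, I will use that $\epsilon_p$ is a globally parallel section of the flow-trivial sub-bundle $\cV_p$ of the Hitchin splitting, so the real-valued closed $1$-form $Q(\epsilon_p,\omega)$ is well defined on $X$ and the de Rham pairing with the closed cycle $C_\gamma$ gives $\int_0^{\ell_\gamma}Q(\epsilon_p,\omega(X))\,ds=Q(\epsilon_p,\omega_\gamma)=M(\mu_\gamma)$ by the definition of the diffusion. The main obstacle is the $i=p$ computation, which requires careful bookkeeping of two distinct quadratic forms (the signature-$(p,p-1)$ form on $V$ and its extension to $V\oplus L$), of the off-diagonal embedding $V\hookrightarrow\mk{so}(p,p)$, and of the normalization of the lightlike generators, in order to pin down the constant $\tfrac12$; the vanishing for $i<p$ and the conversion to the flow integral are then essentially formal consequences of Proposition \ref{pro:iden} and the flatness of $\epsilon_p$.
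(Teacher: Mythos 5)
Your proposal is correct and takes essentially the same route as the paper. Both pick a $t$-independent trivialization of the flat bundle in which the quadratic form and the Anosov line decomposition are constant, then apply the standard one-line eigenvalue perturbation formula, use the orthogonality encoded in $\mathcal H$ (Proposition \ref{pro:iden}) to kill the variations for $i<p$, and for $i=p$ expand against the lightlike generators $\epsilon_p\mp f$, letting skew-adjointness reinforce the two cross terms and identifying the result with $Q(\omega_\gamma,\epsilon_p)$; you also correctly supply the last step equating $Q(\omega_\gamma,\epsilon_p)$ with $M(\mu_\gamma)$ via the flatness of $\epsilon_p$, which the paper leaves implicit, and correctly read the relevant hypothesis off Proposition \ref{pro:iden} (the paper's citation of the Transversality Corollary at that point is a typo). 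The only genuine improvement you offer is the explicit remark that, because the eigenvalue $1$ of $\rho^0(\gamma)$ is degenerate on $\cV_p\oplus L$, the first-order formula requires the perturbation to be diagonalized there, and that the lightlike lines do this automatically by the skew-symmetry/signature constraints; the paper bypasses this by invoking the $t$-family of Anosov lines being constant in the chosen gauge, which amounts to the same thing. The exact normalization constant and sign ($\tfrac12$ versus $1$, and the choice of which lightlike line is $e_p$) are not nailed down cleanly in your write-up, but the paper's own proof has the same looseness, so this is not a substantive gap.
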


\begin{proof}  We can obtain this lemma as a direct application of  \cite[Lemma 4.1.1]{Labourie:2018fj}, we reproduce the easy proof in this context. We choose basis $e_i$ and $\overline e_i$ of  $\cE_i$  and $\overline\cE_i$  respectively, so that  $
e_p=\frac{1}{2}(\epsilon_p+f)$, $\overline e_{p}=\frac{1}{2}(\epsilon_p-f)$. 
where $\epsilon_p$  is a basis of $E_p$ of norm $1$ and $f$ a basis of the trivial bundle $L$ of norm $-1$.
Writing $\dot a=\diffo a(t)$,  then $	M(\mu_\gamma)$ is equal to 
\begin{eqnarray*}
Q(\omega_\gamma,\epsilon_p)=Q(\dot\rho_\gamma(f) \epsilon_p)
	=\braket{e_p+\overline e_{p}\mid \dot\rho_\gamma(e_p- \overline e_{p})}=2\braket{\dot\rho_\gamma(e_p)\mid\overline e_p}=2\dot\lambda_p\ .
\end{eqnarray*}
Similarly, by proposition \ref{pro:trans}, $
0=\braket{\dot\rho_\gamma(\epsilon_k)\mid\epsilon_{2p-k}}$ for $k<p$, thus $\dot\lambda_k=0$. \end{proof}

\begin{corollary}\label{cor:marginv}
	For any measure $\mu$, if $g$ is the variation of the reparametrisation of the last root flow
\begin{eqnarray}
		2\int_{\mathsf UX}g\  {\rm d}\mu&=&M(\mu)\ .\label{eq:mivp}
\end{eqnarray}
\end{corollary}
\begin{proof} Let $\mu_\gamma$ is a current supported on a closed orbit. Then using the definition for the first equality, the fact that $\lambda_{p}=1$ for an $\mathsf{SO}(p,p-1)$ representation and lemma \ref{lem:marg}
$$
\int_{\mathsf UX}g\  {\rm d}\mu_\gamma=\frac{\dot\lambda_{p-1}}{\lambda_{p-1}}+\frac{\dot\lambda_{p}}{\lambda_{p}}=\dot\lambda_{p}=\frac{1}{2}M(\mu_\gamma)\ .	
$$	
Thus the equation \eqref{eq:mivp} holds  for all currents supported on closed orbits, hence for all linear combination of such by linearity, hence for all measures by density and continuity of the diffusion.
\end{proof}
\subsubsection{Abramov lemma} We will use the thermodynamic formalism and refer to \cite{Bridgeman:2015ba} for  a general discussion and references.  

Let $\flo{\psi^s}$ be  a family metric Anosov flow on a space $M$. Let $\ell^s_\gamma$ be the length of every closed orbit $\gamma$ for $\psi^s$. Let $f_s$ be a family of functions  on $M$ so that $$
\ell_\gamma^s=\int_0^{\ell^0_\gamma} f_s\circ\psi^0_u (x)\ {\rm d} u\ , 
$$
where $x$ is a point in $\gamma$ -- see \cite[Paragraph 3.1]{Bridgeman:2015ba} for details.   Then if $m$ is an invariant measure for $\flo{\psi^0}$, then 
\begin{eqnarray}
m^s\defeq \frac{1}{\int_M f_s m} f_s m\label{def:ms}\ ,
\end{eqnarray}
is an invariant measure for $\flo{\psi^s}$. The Abramov lemma  \cite[Lemma  2.4]{Sambarino:2014jv} is then that
\begin{eqnarray}
h(m^s)=	\frac{1}{\int_M f_s m} h(m)\ .
\end{eqnarray}
Then as a consequence.
\begin{lemma}\label{lem:abra}  Assume that $f^s(m)$ depends $C^1$ in $s$ and the derivative is bounded in $m$. Let $h_s$ be  the topological entropy of $\psi^s$, assuming that $h_s$ is constant and non zero  then 
\begin{eqnarray}
		0=\int_M 	\left.\frac{{\rm d} f_s}
	{{\rm d}s}
	\right\vert_{s=0}\  {\rm d}\mu_{\tiny{eq}}\ .
\end{eqnarray}
\end{lemma}
\begin{proof} Let us consider $\mu_{\tiny{eq}}^s$ as in formula \ref{def:ms}. By Abramov lemma  for the first equality and the definition of topological entropy for the second
$$
\frac{h_0}{\int_M f_s\  {\rm d}\mu_{\tiny{eq}}}=h(\mu_{\tiny{eq}}^s)\leq h_0\ .
$$
Thus for all $s$, $\int_M f_s  {\rm d}\mu_{\tiny{eq}} \geq 1$, while $\int_M f_0  {\rm d}\mu_{\tiny{eq}}=\int_M   {\rm d}\mu_{\tiny{eq}}=1$.
The result followss
\end{proof}

\subsubsection{Proof of the Affine action Theorem \ref{theo:aff}}
Let $\sigma$ be a Hitchin representation in  the affine group whose linear part $\rho$ is  Hitchin in $\mathsf{SO}(p,p-1)$. We interpret $\sigma$ as a a family of representation $(\rho_s)_{s\in[0,1]}$ in $\mathsf{SO}(p,p)$ with $\rho_0=\rho$ as in paragraph \ref{para:inter} and proposition \ref{pro:iden}.
 
Let $\psi^s$ be the last root flow of $\rho_s$ and $f_s$ be a family of reparametrisations of $\psi^0$ giving rise to $\psi_s$. Let $g=\left.\frac{{\rm d} f_s}
	{{\rm d}s}\right\vert_{s=0}$. Since the entropy of $\psi^s$ is constant by the Entropy Theorem \ref{theo:entrop}, we have by 
 lemma \ref{lem:abra} that $\int_g g\ {\rm d}\mu=0$ where $\mu$ is  the Bowen--Margulis measure of the last root flow of $\rho_0$. Thus by corollary \ref{cor:marginv},  $M(\mu)=0$.

Now by  \cite[Theorem 7.1 and Definition 4.4]{Ghosh:yPvRLZo8}  if there is a measure that annihilates the Margulis invariant, then the action on the affine group is not proper. This concludes the proof of Danciger and Zhang's Theorem \ref{theo:aff}  .

\bibliographystyle{amsplain}
\providecommand{\bysame}{\leavevmode\hbox to3em{\hrulefill}\thinspace}
\providecommand{\MR}{\relax\ifhmode\unskip\space\fi MR }
% \MRhref is called by the amsart/book/proc definition of \MR.
\providecommand{\MRhref}[2]{%
  \href{http://www.ams.org/mathscinet-getitem?mr=#1}{#2}
}
\providecommand{\href}[2]{#2}
\bibliographystyle{amsplain}

  \end{document}